\documentclass{amsart}
\usepackage[all]{xy}
\usepackage{color}
\usepackage{amsthm}
\usepackage{amssymb}
\usepackage[colorlinks=true]{hyperref}



\setcounter{equation}{0}

\numberwithin{equation}{section}

\newtheorem{theorem}[equation]{Theorem}
\newtheorem*{theorem*}{Theorem}
\newtheorem{lemma}[equation]{Lemma}

\newtheorem*{conjecture*}{Mamma Conjecture}
\newtheorem*{conjecture1*}{Mamma Conjecture (revisited)}
\newtheorem{proposition}[equation]{Proposition}
\newtheorem{corollary}[equation]{Corollary}
\newtheorem*{corollary*}{Corollary}

\theoremstyle{remark}

\newtheorem{example}[equation]{Example}

\newtheorem{notation}[equation]{Notation}

\theoremstyle{remark}

\setcounter{tocdepth}{1}

\newcommand{\cA}{{\mathcal A}}
\newcommand{\cB}{{\mathcal B}}
\newcommand{\cC}{{\mathcal C}}
\newcommand{\cD}{{\mathcal D}}

\newcommand{\cM}{{\mathcal M}}

\newcommand{\cO}{{\mathcal O}}
\newcommand{\cP}{{\mathcal P}}

\newcommand{\cR}{{\mathcal R}}

\newcommand{\cT}{{\mathcal T}}
\newcommand{\cU}{{\mathcal U}}

\newcommand{\Spt}{\mathsf{Spt}}

\newcommand{\bbD}{\mathbb{D}}

\newcommand{\bbL}{\mathbb{L}}

\newcommand{\bbR}{\mathbb{R}}
\newcommand{\bbS}{\mathbb{S}}

\newcommand{\bbZ}{\mathbb{Z}}

\DeclareMathOperator{\Mot}{Mot}

\DeclareMathOperator{\Fun}{Fun} 
\DeclareMathOperator{\BK}{BKK} 

\newcommand{\uk}{\underline{k}}
\newcommand{\uA}{\underline{A}}

\newcommand{\dgcat}{\mathsf{dgcat}}
\newcommand{\perf}{\cD_\mathsf{perf}}
\newcommand{\perfdg}{\cD_\mathsf{perf}^{\dg}}

\newcommand{\dg}{\mathsf{dg}}

\newcommand{\uHom}{\underline{\mathsf{Hom}}}
\newcommand{\Hom}{\mathsf{Hom}}
\newcommand{\HomC}{\uHom_{\,!}}
\newcommand{\HomA}{\uHom_{\,\mathsf{A}}}

\newcommand{\rep}{\mathsf{rep}}
\newcommand{\Rep}{\mathsf{Rep}}
\newcommand{\repdg}{\mathsf{rep}_{\dg}}

\newcommand{\Ho}{\mathsf{Ho}}
\newcommand{\HO}{\mathsf{HO}}

\newcommand{\Hmo}{\mathsf{Hmo}}
\newcommand{\op}{\mathsf{op}}

\newcommand{\Map}{\mathsf{Map}}

\newcommand{\too}{\longrightarrow}

\newcommand{\ie}{\textsl{i.e.}\ }

\newcommand{\Madd}{\Mot_{\mathsf{A}}}
\newcommand{\Maddu}{\Mot^{\mathsf{u}}_{\mathsf{A}}}

\newcommand{\Uadd}{\cU_{\mathsf{A}}}
\newcommand{\Uaddu}{\cU^{\mathsf{u}}_{\mathsf{A}}}

\begin{document}

\title[Products, multiplicative Chern characters, and finite coefficients]{Products, multiplicative Chern characters, and finite coefficients via non-commutative motives}
\author{Gon{\c c}alo~Tabuada}

\address{Gon\c calo Tabuada, Departamento de Matematica, FCT-UNL, Quinta da Torre, 2829-516 Caparica,~Portugal }
\email{tabuada@fct.unl.pt}

\subjclass[2000]{19D45, 19D55, 18G55}
\date{\today}

\keywords{Algebraic $K$-theory, Chern characters, Non-commutative motives}

\thanks{The author was partially supported by the Clay Mathematics Institute, the Midwest Topology Network, and the Calouste Gulbenkian Foundation.}

\begin{abstract}
Products, multiplicative Chern characters, and finite coefficients, are unarguably among the most important tools in algebraic $K$-theory. Although they admit numerous different constructions, they are not yet fully understood at the conceptual level. In this article, making use of the theory of non-commutative motives, we change this state of affairs by characterizing these constructions in terms of simple, elegant, and precise universal properties. We illustrate the potential of our results by developing two of its manyfold consequences: (1) the multiplicativity of the negative Chern characters follows directly from a simple factorization of the mixed complex construction; (2) Kassel's bivariant Chern character admits an adequate extension, from the Grothendieck group level, to all higher algebraic K-theory.
\end{abstract}

\maketitle
\vskip-\baselineskip
\vskip-\baselineskip
\vskip-\baselineskip

\section{Introduction}

This article is part of a long-term project, whose objective is the study of (higher) algebraic $K$-theory via non-commutative motives. Previous contributions can be found in \cite{BGT,CT,CT1,Duke,IMRN,susp,transf,biv}. Here, we focus on products, multiplicative Chern characters, and finite coefficients.

Since the early days, it has been expected that algebraic $K$-theory would come equipped with some kind of {\em products}. Over the last decades such products were constructed by Loday, May, McCarthy, Milnor, Quillen, Waldhausen, and others in different settings and by making use of different tools~\cite{Loday-prod,May-prod,McCarthy,Milnor,Quillen-K, Wald,Wald-prod}. Among other important applications, they yielded new elements in algebraic $K$-theory and simplified proofs of Riemann-Roch theorems; see Weibel's survey~\cite{Weibel-survey}.

Algebraic $K$-theory is a very powerful and subtle invariant whose calculation is often out of reach. In order to capture some of its (multiplicative) information Connes-Karoubi, Dennis, Goodwillie, Hood-Jones, Kassel, McCarthy, and others constructed {\em (multiplicative) Chern characters} towards simpler theories by making use of a variety of highly involved techniques \cite{CK,Dennis,Goodwillie,HJ,Kassel-biv,McCarthy}.

In order to attack the Lichtenbaum-Quillen conjectures, Browder~\cite{Browder}  introduced the {\em algebraic $K$-theory with finite coefficients}. Since then, this flexible theory became one of the most important (working) tools in the field.

Although the importance of the aforementioned constructions extends well beyond algebraic $K$-theory, they remained rather {\em ad hoc} and somewhat mysterious. Therefore, a solution to the following question is of major importance:

\vspace{0.2cm}

\noindent\textbf{Question: }\textit{Is it possible to characterize all the aforementioned constructions by simple, elegant, and precise universal properties ?}

In this article we affirmatively answer this question. Moreover, the theory of {\em non-commutative motives} plays a central role is such characterizations.
 
\subsection*{Non-commutative motives}
A {\em differential graded (=dg) category}, over a fixed base commutative ring $k$, is a category enriched over
cochain complexes of $k$-modules; see \S\ref{sub:dg} for details. All the classical invariants such as cyclic homology (and its variants), algebraic $K$-theory, and even topological cyclic homology, extend naturally from $k$-algebras to dg categories. In order to study all these invariants simultaneously the notion of {\em additive invariant} was introduced in~\cite{Duke}. This notion, that we now recall, makes use of the language of Grothendieck derivators, a formalism which
allows us to state and prove precise universal properties; see \S\ref{sub:derivators}. The category $\dgcat$ of dg categories carries a Quillen model structure, whose weak equivalences are the {\em derived Morita equivalences} (see \S\ref{sub:Morita}), and so it gives rise to a derivator $\HO(\dgcat)$. Let $\bbD$ be a triangulated derivator and $\mathit{E}: \HO(\dgcat) \to \bbD$ a morphism of derivators. We say that $\mathit{E}$ is an {\em additive invariant} if it preserves filtered homotopy
colimits and sends split exact sequences (\ie sequences of dg categories which become split exact after passage to the associated derived categories; see \cite[Definition~13.1]{Duke}) to direct sums.
By the additivity results of Blumberg-Mandell, Keller, Waldhausen, and others~\cite{BM,cyclic,AGT,Wald} all the mentioned invariants give rise to additive invariants.
In~\cite{Duke} the {\em universal additive invariant} was constructed
\begin{equation*}
 \Uadd: \HO(\dgcat) \too \Madd\,.
\end{equation*}
Given any triangulated derivator $\bbD$ we have an induced equivalence of categories
\begin{equation}\label{eq:cat}
(\Uadd)^{\ast}:\ \HomC(\Madd, \bbD) \stackrel{\sim}{\too} \HomA(\HO(\dgcat), \bbD)\,,
\end{equation}
where the left-hand side denotes the category of homotopy colimit preserving morphisms of derivators and the right-hand side the category of additive invariants. Because of this universal property, which is reminiscent of the theory of motives, the derivator $\Madd$ is called the {\em additive motivator}, and its base category $\Madd(e)$ the {\em triangulated category of non-commutative motives}.

The tensor product extends naturally from $k$-algebras to dg categories, giving rise to a (derived) symmetric monoidal structure $-\otimes^\bbL-$ on $\HO(\dgcat)$ whose unit is the dg category $\uk$ with one object and with $k$ as the dg algebra of endomorphisms. In \cite{CT1} this monoidal structure was extended to $\Madd$ in a universal way\footnote{In \cite{CT1} we have considered the localizing analogue of $\Madd$. However, the arguments in the additive case are completely similar.}: given any triangulated derivator $\bbD$, endowed with a homotopy colimit preserving monoidal structure, the above equivalence~\eqref{eq:cat} admits a symmetric monoidal sharpening
\begin{equation}\label{eq:catmon}
(\Uadd)^{\ast}:\ \HomC^\otimes(\Madd, \bbD) \stackrel{\sim}{\too} \HomA^\otimes(\HO(\dgcat), \bbD)\,.
\end{equation}
As any triangulated derivator, $\Madd$ is naturally enriched over spectra; let us denote by $\bbR\Hom(-,-)$ this enrichment. In \cite{Duke} it was proved that algebraic $K$-theory not only descends uniquely to $\Madd$ (since it is an additive invariant) but, moreover, it becomes co-representable by the unit object, \ie for every dg category $\cA$ we have a natural equivalence of spectra
\begin{equation}\label{eq:corep-spt}
\bbR\Hom(\Uadd(\uk),\Uadd(\cA)) \simeq K(\cA)\,.
\end{equation}
In the triangulated category $\Madd(e)$ we have abelian group isomorphisms
\begin{equation}\label{eq:corep-K}
\Hom(\Uadd(\uk),\Uadd(\cA)[-n]) \simeq K_n(\cA) \qquad n \geq 0\,.
\end{equation}
\section{Statement of results}\label{sec:statements}
\subsection*{Products}
Let $\cA$ and $\cB$ be dg categories. On one hand, we can consider the associated categories $\widehat{\cA}$, $\widehat{\cB}$ and $\widehat{\cA\otimes \cB}$ of perfect modules (see Notation~\ref{not:hats}) and the bi-exact functor (see \S\ref{sub:pairings})
\begin{eqnarray}\label{eq:bi-exact}
\widehat{\cA} \times \widehat{\cB} \too \widehat{\cA \otimes \cB} && (M,N) \mapsto M \otimes_k N\,.
\end{eqnarray}
Following Waldhausen (see \S\ref{sub:Kdg}) we obtain then a pairing in algebraic $K$-theory
\begin{equation}\label{eq:pairing1}
K(\cA) \wedge K(\cB) \too K(\cA \otimes \cB) \,.
\end{equation}
On the other hand, if $\cA$ (or $\cB$) is {\em $k$-flat} (\ie for every pair $(x,y)$ of objects in $\cA$ the functor $\cA(x,y)\otimes-$ preserves quasi-isomorphisms; see \cite[\S4.2]{ICM}) $\cA \otimes^\bbL \cB \simeq \cA \otimes \cB $ and so the symmetric monoidal structure on $\Madd(e)$ combined with \eqref{eq:corep-spt} gives rise to another pairing in algebraic $K$-theory
\begin{equation}\label{eq:pairing3}
K(\cA) \wedge K(\cB) \too K(\cA \otimes^\bbL \cB)\simeq K(\cA \otimes \cB)\,.
\end{equation}
\begin{theorem}\label{thm:1}
The pairings \eqref{eq:pairing1} and \eqref{eq:pairing3} agree up to homotopy. In particular, the abelian group homomorphisms $K_i(\cA) \otimes_\bbZ K_j(\cB) \to K_{i+j}(\cA \otimes \cB)$ induced by the above pairing~\eqref{eq:pairing1} agree with the abelian group homomorphisms obtained by combining the symmetric monoidal structure on $\Madd(e)$ with \eqref{eq:corep-K}.
\end{theorem}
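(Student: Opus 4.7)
\emph{Proof plan.} My approach is to identify both pairings as manifestations of the \emph{same} symmetric monoidal enhancement of algebraic $K$-theory, whose uniqueness is guaranteed by the symmetric monoidal universal property \eqref{eq:catmon}. Concretely, Waldhausen's construction via the bi-exact functor \eqref{eq:bi-exact} promotes $K$-theory to a (lax) symmetric monoidal additive invariant $K:\HO(\dgcat)\to\HO(\Spt)$, with monoidal structure maps given by \eqref{eq:pairing1}. Applying \eqref{eq:catmon} to the target derivator $\HO(\Spt)$ (equipped with the smash product) then yields a \emph{unique} symmetric monoidal homotopy-colimit-preserving extension $\overline{K}:\Madd\to\HO(\Spt)$.

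To identify the motivic pairing \eqref{eq:pairing3} with the lax monoidal structure of $\overline{K}$, I would use the co-representability equivalence \eqref{eq:corep-spt} to identify $\overline{K}$ with the spectrum-valued functor $\bbR\Hom(\Uadd(\uk),-)$. Since $\Uadd(\uk)$ is the $\otimes$-unit of $\Madd$, tensoring morphisms and composing with the unit isomorphism $\Uadd(\uk)\otimes\Uadd(\uk)\simeq\Uadd(\uk)$ endows this representable functor with a canonical lax monoidal structure
\[
\bbR\Hom(\Uadd(\uk),X)\wedge\bbR\Hom(\Uadd(\uk),Y)\too\bbR\Hom(\Uadd(\uk)\otimes\Uadd(\uk),X\otimes Y)\simeq\bbR\Hom(\Uadd(\uk),X\otimes Y)\,.
\]
Evaluating at $X=\Uadd(\cA)$, $Y=\Uadd(\cB)$, and invoking the $k$-flatness hypothesis to replace $\cA\otimes^\bbL\cB$ by $\cA\otimes\cB$, recovers exactly \eqref{eq:pairing3}. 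By the uniqueness portion of \eqref{eq:catmon}, this canonical structure must coincide with the lax monoidal structure of $\overline{K}$, which by construction is the Waldhausen pairing \eqref{eq:pairing1}. The second assertion of the theorem, concerning the induced abelian group homomorphisms $K_i(\cA)\otimes_\bbZ K_j(\cB)\to K_{i+j}(\cA\otimes\cB)$, then follows by passing to stable homotopy groups and invoking \eqref{eq:corep-K}.

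The main obstacle, which is essentially bookkeeping rather than conceptual, lies in the very first step: rigorously promoting Waldhausen's construction to a lax symmetric monoidal additive invariant at the level of derivators, with all associativity, unit, and symmetry coherences checked. This requires working with $k$-flat (or cofibrant) replacements so that the derived tensor product $-\otimes^\bbL-$ on $\HO(\dgcat)$ is represented by the underived one, and then marshalling Waldhausen-style additivity arguments to handle the bi-exact pairings coherently. Once this is in place, the symmetric monoidal universal property \eqref{eq:catmon} automatically forces the agreement of \eqref{eq:pairing1} and \eqref{eq:pairing3}.
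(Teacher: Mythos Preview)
Your argument has a circularity at the crucial step. The equivalence \eqref{eq:catmon} says that a symmetric monoidal homotopy-colimit-preserving morphism out of $\Madd$ is determined, up to monoidal isomorphism, by its restriction to $\HO(\dgcat)$ \emph{as a symmetric monoidal additive invariant}. To invoke this uniqueness for the pair $\overline{K}$ and $\bbR\Hom(\Uadd(\uk),-)$, you must already know that their restrictions along $\Uadd$ agree \emph{as lax monoidal functors}. The co-representability statement \eqref{eq:corep-spt} only identifies the underlying functors; upgrading that identification to one of lax monoidal functors is precisely the assertion that \eqref{eq:pairing1} and \eqref{eq:pairing3} agree, i.e.\ the theorem itself. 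The universal property does not say that a fixed homotopy-colimit-preserving functor admits at most one lax monoidal enhancement, which is what your ``by uniqueness'' step is silently using. A secondary issue is that \eqref{eq:catmon}, as formulated in \cite{CT1}, concerns (strong) symmetric monoidal morphisms, whereas Waldhausen's pairing makes $K$ only \emph{lax} monoidal (the unit map $\bbS\to K(\uk)$ is not an equivalence), so a lax variant of \eqref{eq:catmon} would have to be established first.

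The paper avoids this circularity by a direct comparison rather than an appeal to uniqueness. It unwinds both pairings at the level of the individual spaces $K(-)_n$ of the spectra, using the explicit description of $\bbR\Hom(\Uadd(\uk),\Uadd(\cC))_r$ as $|\Map(\uk, S.^{(r)}\widehat{\cC}_\dg)|$ coming from the proof of \eqref{eq:corep-spt} in \cite{Duke}. The key technical inputs are a computation $\pi_!\,\Uaddu(S.^{(r)}\widehat{\cC}_\dg)\simeq\Sigma^{(r)}\Uaddu(\widehat{\cC}_\dg)$ in the unstable motivator (proved by induction on $r$ via the simplicial path-object sequence), together with the observation that the bi-exact functors \eqref{eq:bi-exact} admit dg enrichments assembling into a morphism $\Phi^{(n+m)}$ of multi-simplicial dg categories which becomes invertible after applying $\pi_!\,\Uaddu$. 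With these in hand, both pairings are exhibited as the \emph{same} homotopy colimit of maps $|wS^{(n)}_{q_\bullet}\widehat{\cA}|\times|wS^{(m)}_{p_\bullet}\widehat{\cB}|\to|wS^{(n+m)}_{q_\bullet,p_\bullet}\widehat{(\cA\otimes\cB)}|$, and hence agree up to homotopy.
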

\begin{example}{(Commutative algebras)}\label{ex:commutative}
Let $\cA=\cB=\underline{A}$, with $A$ a $k$-flat commutative algebra. Since $A$ is commutative, its multiplication is a morphism of $k$-algebras and hence a dg functor. By composing \eqref{eq:pairing3} with the induced map
$$K(\underline{A}\otimes\underline{A}) \simeq \bbR\Hom(\Uadd(\uk), \Uadd(\underline{A}\otimes\underline{A})) \too \bbR\Hom(\Uadd(\uk), \Uadd(\underline{A})) \simeq K(\underline{A})$$
we recover the algebraic $K$-theory pairing on $A$ constructed by Waldhausen in \cite{Wald-prod}. In particular, we recover the (graded-commutative) multiplicative structure on $K_\ast(A)$ constructed originally by Loday in \cite{Loday-prod}; see \cite[\S4]{Weibel-survey} for the agreement between the approach of Waldhausen and the approach of Loday.
\end{example}

\begin{example}{(Schemes)}\label{ex:schemes}
Let $X$ be a quasi-compact and quasi-separated scheme over a base field $k$. Recall from \cite{LO} (or from \cite[Example~4.5(i)]{CT1}) that the derived category of perfect complexes of $\cO_X$-modules admits a natural differential graded (=dg) enhancement $\perf^\dg(X)$. Under this enhancement, the bi-exact functor $(E^\cdot, F^\cdot) \mapsto E^\cdot \otimes_{\cO_X}^\bbL F^\cdot$ lifts to a dg functor $\perf^\dg(X) \otimes \perf^\dg(X) \to \perf^\dg(X)$.
Therefore, since the algebraic $K$-theory of $X$ can be recovered from $\perf^\dg(X)$ (see \cite[\S5.2]{ICM}), the composition of~\eqref{eq:pairing3} with the induced map
$$\bbR\Hom(\Uadd(\uk), \Uadd(\perf^\dg(X)\otimes\perf^\dg(X))) \too \bbR\Hom(\Uadd(\uk), \Uadd(\perf^\dg(X)))$$
gives rise to the algebraic $K$-theory pairing on $X$ constructed originally by Thomason-Trobaugh in \cite[\S3.15]{TT}.
\end{example}
Note that Theorem~\ref{thm:1} (and Examples \ref{ex:commutative}\text{-}\ref{ex:schemes}) offers an elegant conceptual characterization of the algebraic $K$-theory products. Informally speaking, algebraic $K$-theory is the additive invariant co-represented by the unit of $\Madd(e)$, and the algebraic $K$-theory products are the operations naturally induced by the symmetric monoidal structure of $\Madd(e)$. Work in progress with Blumberg and Gepner suggests that a stronger result, in the setting of (stable) infinity categories, can be proved: algebraic $K$-theory carries a {\em unique} $E_{\infty}$-multiplicative structure. 
\subsection*{Multiplicative Chern characters}
Let $E: \HO(\dgcat) \to \bbD$ be a symmetric monoidal additive invariant. Thanks to equivalence \eqref{eq:catmon} there is a unique symmetric monoidal morphism of derivators $\overline{E}$ making the diagram
$$
\xymatrix{
\HO(\dgcat) \ar[r]^-E \ar[d]_{\Uadd} & \bbD \\
\Madd \ar[ur]_-{\overline{E}} &
}
$$
commute. Let us write ${\bf 1}:=E(\underline{k})$ for the unit of $\bbD(e)$. Then, for every dg category $\cA$ the morphism $\overline{E}$ combined with equivalence \eqref{eq:corep-spt} gives rise to a canonical map 
\begin{equation*}
ch : K(\cA) \simeq \bbR\Hom(\Uadd(\uk), \Uadd(\cA)) \too \bbR\Hom_{\bbD(e)}({\bf 1}, E(\cA))\,.
\end{equation*}
Now, recall from~\cite[Examples~7.9-7.10]{CT1} that the mixed complex $(C)$ and the Hochschild homology $(HH)$ constructions give rise to symmetric monoidal additive invariants
\begin{eqnarray*}
C:\HO(\dgcat) \too \HO(\cC(\Lambda)) && HH: \HO(\dgcat) \too \HO(\cC(k))\,.
\end{eqnarray*} 
Here, $\Lambda$ is the dg algebra $k[\epsilon]/\epsilon^2$ with $\epsilon$ of degree $-1$ and $d(\epsilon)=0$. Moreover, there is a symmetric monoidal forgetful morphism $\HO(\cC(\Lambda)) \to \HO(\cC(k))$ whose pre-composition with $C$ equals $HH$. By the above considerations we obtain then a canonical commutative diagram of spectra
\begin{equation}\label{eq:diag-spt}
\xymatrix{
& \bbR\Hom(k,C(\cA)) \ar[d] \\
K(\cA) \ar[ur]^-{ch^{\Lambda}} \ar[r]_-{ch^k} & \bbR\Hom(k, HH(\cA))\,.
}
\end{equation}
As shown in \cite[Examples 8.9-8.10]{CT1} we have natural isomorphisms of abelian groups
\begin{eqnarray*}
 \Hom_{\cD(\Lambda)}(k,C(\cA)[-n]) \simeq  HC_n^{-}(\cA) && \Hom_{\cD(k)}(k,HH(\cA)[-n]) \simeq  HH_n(\cA)\,,
\end{eqnarray*}
where $HC_n^{-}(\cA)$ denotes the $n^{\textrm{th}}$ negative cyclic homology group of $\cA$. Therefore, by passing to the homotopy groups in the above diagram \eqref{eq:diag-spt} (or equivalently by considering the morphisms in the triangulated categories $\Madd(e)$, $\cD(\Lambda)$ and $\cD(k)$) we obtain canonical commutative diagrams of abelian groups
\begin{equation*}
\xymatrix{
& HC_n^{-}(\cA) \ar[d] \\
K_n(\cA) \ar[ur]^-{ch_n^\Lambda} \ar[r]_-{ch^k_n} & HH_n(\cA)\,.
}
\end{equation*}
\begin{theorem}\label{thm:2}
When $\cA=\underline{A}$, with $A$ a $k$-algebra, the above abelian group homomorphisms $ch^\Lambda_n$ and $ch^k_n$ agree, respectively, with the $n^{\textrm{th}}$ negative Chern character independently constructed by Hood-Jones~\cite{HJ} and by Goodwillie~\cite{Goodwillie}, and with the $n^{\textrm{th}}$ Dennis trace map originally constructed by Dennis~\cite{Dennis}.
\end{theorem}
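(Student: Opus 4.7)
The overall plan is to factor the identification through Keller's intermediate constructions of the Dennis trace and the negative Chern character for dg categories. Keller's constructions \cite{cyclic,AGT} already extend the classical ones from $k$-algebras to all dg categories, so it suffices to show that the abstract Chern characters $ch^k_n$ and $ch^\Lambda_n$ coincide with Keller's.

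First I would unpack the abstract construction diagrammatically. A class $\alpha \in K_n(\cA)$ corresponds, via co-representability \eqref{eq:corep-K}, to a morphism $\Uadd(\uk)\to \Uadd(\cA)[-n]$ in $\Madd(e)$; applying the symmetric monoidal extension $\overline{HH}$ (resp. $\overline{C}$) guaranteed by \eqref{eq:catmon}, together with $HH(\uk)=k$ (resp. $C(\uk)=k$), yields a morphism $k \to HH(\cA)[-n]$ in $\cD(k)$ (resp. $k\to C(\cA)[-n]$ in $\cD(\Lambda)$); these morphisms are by definition $ch^k_n(\alpha)$ and $ch^\Lambda_n(\alpha)$. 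I would then show that this prescription agrees with Keller's for every dg category $\cA$: the essential observation is that the co-representability \eqref{eq:corep-spt} established in \cite{Duke} comes from Waldhausen's model of $K$-theory, so $\overline{HH}$ applied to the morphism classifying a representable perfect module $\uk \to \hat{\cA}$ computes the image of $1$ under the functorial map $HH(\uk)\to HH(\cA)$, \ie the trace of the identity endomorphism. This matches the defining property of Keller's Dennis trace; the same argument, applied to $\overline{C}$, matches Keller's negative Chern character. Finally, specialising $\cA = \uA$ and invoking Keller's theorem that his constructions recover the classical Dennis \cite{Dennis} and Hood-Jones \cite{HJ}/Goodwillie \cite{Goodwillie} Chern characters on $k$-algebras yields Theorem~\ref{thm:2}.

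I expect the main obstacle to be the identification of the abstract $\overline{HH}$ with Keller's construction at the level of the universal additive invariant, rather than only on homotopy groups. This requires careful tracing of the Bousfield localization model of $\Madd$ in \cite{Duke} and of the symmetric monoidal extension procedure behind \eqref{eq:catmon}; once this identification is fixed at the level of morphisms $\Uadd(\uk)\to\Uadd(\cA)$ in $\Madd(e)$, the passage to higher $K_n$ via suspension is automatic from the triangulated structure, and the commutativity of the diagram relating $ch^\Lambda_n$ and $ch^k_n$ follows from the fact that \eqref{eq:catmon} is natural in $\bbD$ together with the observation that the forgetful morphism $\HO(\cC(\Lambda))\to\HO(\cC(k))$ is symmetric monoidal.
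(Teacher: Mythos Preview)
Your outline is reasonable in spirit but differs substantially from the paper's argument, and there is a genuine gap in the step you flag as the ``main obstacle''.

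The paper does \emph{not} reduce to a $K_0$-level check on representable perfect modules followed by an appeal to suspension. Instead it works directly at the spectrum level: using the Keller--McCarthy delooping theorem (\cite[\S1.13]{cyclic}, \cite[Corollary~3.6.3]{McCarthy}), the target $\bbR\Hom_{\cD(k)}(k,HH(\uA))$ is exhibited as the explicit $\Omega$-spectrum $n\mapsto |HH_{DK}(S.^{(n)}\widehat{\uA}_\dg)|$, parallel to the Waldhausen model $n\mapsto |wS.^{(n)}\widehat{\uA}|$ of $K(\uA)$. This makes the comparison a simplicial-degree-by-simplicial-degree computation. The paper then builds a commuting square of bisimplicial sets linking $\mathrm{obj}(S.\cP_A)$, $N.iS.\cP_A$, $N.wS.\widehat{\uA}$ and $HH_{DK}(S.\widehat{\uA}_\dg)$ via three weak equivalences (Waldhausen's inclusion-of-objects, Thomason--Trobaugh's comparison of $\cP_A$ with $\widehat{\uA}$, and the Keller--McCarthy agreement property). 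The induced diagonal map factors as $\mathrm{obj}(S.\cP_A)\xrightarrow{\mathrm{id}} HH_{DK}(S.\cP_A)_0\hookrightarrow HH_{DK}(S.\cP_A)$, which McCarthy \cite[\S4.4--4.5]{McCarthy} identified with the Dennis trace. The negative Chern character is handled the same way, with an explicit zero-cycle in $B^{-}(S.\widehat{\uA}_\dg)$ computed via Connes' $B^{-}$-construction and again matched with McCarthy's description.

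Your proposal's weak point is the sentence ``once this identification is fixed at the level of morphisms $\Uadd(\uk)\to\Uadd(\cA)$ in $\Madd(e)$, the passage to higher $K_n$ via suspension is automatic''. Checking on a single perfect module only pins down the map on $\pi_0$; it does \emph{not} determine a map of spectra, and there is no uniqueness principle here that lets you propagate by suspension alone---two natural transformations of additive invariants can agree on $K_0$ of every input without agreeing on higher $K$-groups. The paper circumvents this precisely by producing compatible explicit models at every spectrum level via the delooping theorem, rather than by invoking abstract triangulated structure. If you want to salvage your route, you would need to identify $\overline{HH}$ with Keller's construction as morphisms of derivators (or at least of spectra-valued functors), which in practice forces you back through the same simplicial machinery the paper uses. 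Note also that the decisive external input is McCarthy's \cite{McCarthy}, not only Keller's \cite{cyclic}.
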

Theorem~\ref{thm:2} provides a simple conceptual characterization of the highly involved work of Dennis, Goodwillie, and Hood-Jones. Intuitively it tell us that the negative Chern character, resp. the Dennis trace map, is the unique symmetric monoidal factorization of the mixed complex construction, resp. of the Hochschild homology construction, through the universal additive invariant.

When the $k$-algebra $A$ is commutative, the multiplicative structures on $HC^-_\ast(A)$ and $HH_\ast(A)$ can be recovered, respectively, from the symmetric monoidal structures of $\cD(\Lambda)$ and $\cD(k)$; see \cite[\S5]{JK}. Therefore, by combining Theorem~\ref{thm:2} with Example~\ref{ex:commutative}, we obtain {\em for free} the following result (proved originally by Hood-Jones~\cite[\S5]{HJ} using a highly elaborate construction), which was one of the main driving forces behind the development of negative cyclic homology.
\begin{corollary}
Let $A$ be a commutative $k$-algebra. Then, the negative Chern characters and the Dennis trace maps are multiplicative.
\end{corollary}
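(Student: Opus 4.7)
The strategy is to combine Theorem~\ref{thm:2} with Example~\ref{ex:commutative} and the observation that, on all three sides---$K_\ast(A)$, $HC^{-}_\ast(A)$, and $HH_\ast(A)$---the classical products arise from the ambient symmetric monoidal structures postcomposed with the dg functor $\mu : \underline{A} \otimes \underline{A} \to \underline{A}$ induced by the commutative multiplication of $A$. Multiplicativity of the Chern characters will then be forced by the fact that the factorizations $\overline{C}$ and $\overline{HH}$ supplied by~\eqref{eq:catmon} are themselves symmetric monoidal morphisms of derivators.

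My plan has three steps. First, invoke Example~\ref{ex:commutative} to identify the classical product on $K_\ast(A)$ with the composition of the pairing $\Uadd(\underline{A}) \otimes \Uadd(\underline{A}) \to \Uadd(\underline{A} \otimes \underline{A})$ in $\Madd(e)$ with $\Uadd(\mu)$, transported to $K$-groups via~\eqref{eq:corep-K}. Second, appeal to~\cite[\S5]{JK} for the analogous identifications of the classical products on $HC^{-}_\ast(A)$ and $HH_\ast(A)$, coming from the symmetric monoidal pairings on $\cD(\Lambda)$ and $\cD(k)$ together with $C(\mu)$ and $HH(\mu)$. Third, exploit that $\overline{C}$ and $\overline{HH}$ are symmetric monoidal morphisms of derivators: they carry the $\Madd(e)$-pairing to the $\cD(\Lambda)$-pairing, respectively to the $\cD(k)$-pairing, and they also commute with the morphism induced by $\mu$ by naturality. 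Assembling these two commutativities produces diagrams expressing the multiplicativity of $ch^{\Lambda}_n$ and $ch^{k}_n$, and Theorem~\ref{thm:2} identifies these homomorphisms with the classical negative Chern character and Dennis trace map, so multiplicativity transfers to the classical characters.

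The main obstacle is the bookkeeping of the first two steps, namely matching the derivator-theoretic pairings with the classical products. For $K$-theory this is exactly Example~\ref{ex:commutative}, and for cyclic and Hochschild homology it is the content of~\cite[\S5]{JK}. Once these comparisons are accepted, multiplicativity is obtained essentially for free from the symmetric monoidality of $\overline{C}$ and $\overline{HH}$ combined with Theorem~\ref{thm:2}.
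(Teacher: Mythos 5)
Your proposal is correct and follows essentially the same route as the paper: it combines Theorem~\ref{thm:2} (identifying $ch^\Lambda_n$ and $ch^k_n$ with the classical negative Chern character and Dennis trace map) with Example~\ref{ex:commutative} (identifying the Waldhausen/Loday product on $K_\ast(A)$ with the one coming from the monoidal structure on $\Madd(e)$ and $\Uadd(\mu)$), together with the identification from~\cite[\S5]{JK} of the classical products on $HC^-_\ast(A)$ and $HH_\ast(A)$ with the monoidal pairings on $\cD(\Lambda)$ and $\cD(k)$, and then uses the symmetric monoidality of the factorization $\overline{C}$ to propagate multiplicativity. The paper merely states this combination ``for free'' while you spell out the intermediate commutativities, but the underlying argument is identical.
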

\subsection*{Bivariant picture}
Jones and Kassel, by drawing inspiration from Kasparov's $KK$-theory, introduced in \cite{JK} the {\em bivariant cyclic cohomology theory} of $k$-algebras. 
Immediately afterwards, Kassel~\cite{Kassel-biv} introduced the {\em bivariant algebraic $K$-theory} and constructed a (multiplicative) bivariant Chern character from it to bivariant cyclic cohomology. These involved constructions can be compactly  expressed in terms of a symmetric monoidal functor
$$ch_B: \BK \too \cD(\Lambda)\,.$$  
Roughly, $\BK$ is the additive category\footnote{The initials $\BK$ stand for bivariant, $K$-theory, and Kassel respectively.} whose objects are the $k$-algebras and whose (abelian group) morphisms $\Hom_{\BK}(B,A)$ are the Grothendieck groups of the exact categories of those $B\text{-}A$-bimodules which are projective and of finite type as $A$-modules; for details see the proof of Theorem~\ref{thm:3}.

Kassel's bivariant Chern character was quite an important contribution. For example, it yielded a simple proof of the invariance of cyclic homology under Morita equivalences. However, it has a serious drawback: it is only defined at the level of the Grothendieck groups. Kassel claimed in \cite[pages~368-369]{Kassel-biv} that an extension to the higher algebraic $K$-theory groups should exist but, to the best of the author's knowledge, this important problem remained wide open during the last decades. The following result changes this state of affairs.
\begin{theorem}\label{thm:3}
There is a natural symmetric monoidal additive functor $N$ making the following diagram
\begin{equation*}
\xymatrix{
\BK \ar[d]_N \ar[r]^-{ch_B} & \cD(\Lambda) \\
\Madd(e) \ar[ur]_{\overline{C}(e)} & 
}
\end{equation*}
commute. Moreover, given $k$-algebras $A$ and $B$, with $B$ {\em homotopically finitely presented} (the homotopical version of the classical notion of finite presentation; see \cite[\S4.7]{ICM}), we have an induced isomorphism
\begin{equation}\label{isom-thm3}
\Hom_{\BK}(B,A) \stackrel{\sim}{\too} \Hom_{\Madd(e)}(N(B),N(A))\,.
\end{equation}
\end{theorem}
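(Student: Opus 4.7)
The plan is to set $N(A) := \Uadd(\underline{A})$ on objects and to define $N$ on morphisms through a chain of natural identifications. First, a $B$-$A$-bimodule $M$ which is projective and of finite type as a right $A$-module is exactly an object of the rep-category $\rep(\underline{B}, \underline{A})$ whose underlying right $\underline{A}$-module is perfect; this yields a natural isomorphism
\[
\Hom_{\BK}(B,A) \simeq K_0(\rep(\underline{B}, \underline{A}))\,.
\]
Second, I would establish the refinement of the co-representability formula~\eqref{eq:corep-K}
\[
\Hom_{\Madd(e)}(\Uadd(\cA), \Uadd(\cB)) \simeq K_0(\rep(\cA, \cB))\,,
\]
valid whenever $\cA$ is homotopically finitely presented. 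Composing these two identifications defines the map $\Hom_{\BK}(B,A) \to \Hom_{\Madd(e)}(N(B), N(A))$, which is bijective exactly under the hf.p.\ hypothesis on $B$, thereby handling both the morphism-part of $N$ and the desired isomorphism~\eqref{isom-thm3}.

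Next I would verify that $N$ is functorial, additive, and symmetric monoidal. Composition in $\BK$ is bimodule tensor product over the intermediate algebra, matching composition in the rep-category and hence composition in $\Madd(e)$ under the above identifications. Additivity is automatic, since both Hom-groups are $K_0$-groups and direct sums of bimodules correspond to direct sums in $\Madd(e)$. Symmetric monoidality follows from the identity $\underline{A \otimes A'} = \underline{A} \otimes \underline{A'}$ together with the universal monoidal extension~\eqref{eq:catmon} of $\Uadd$. For commutativity of the triangle: on objects, $\overline{C}(e)(\Uadd(\underline{A})) = C(\underline{A}) = ch_B(A)$ by the defining universal property of $\overline{C}$. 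On morphisms, $\overline{C}(e) \circ N$ sends a class $[M]$, via the naturality of $\overline{C}$ on the rep-category description of $\Madd(e)$-morphisms, to the map $C(\underline{B}) \to C(\underline{A})$ induced by $M$ viewed as a $\underline{B}$-$\underline{A}$-bimodule together with the Morita invariance of $C$; this is precisely Kassel's original construction of $ch_B[M]$.

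The main obstacle will be establishing the second identification. Formula~\eqref{eq:corep-K} only handles the unit case $\cA = \uk$; extending it to arbitrary hf.p.\ sources requires unwinding the construction of the additive motivator $\Madd$ and checking that the resulting bijection respects composition, the symmetric monoidal structure, and the Morita-invariance underlying Kassel's definition. These compatibility checks, together with a careful comparison of the Grothendieck-group structures on the exact category of bimodules and on the rep-category, are where I expect essentially all the real work to lie.
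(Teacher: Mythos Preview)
Your outline follows the same route as the paper, but two points are handled too loosely and, as stated, constitute genuine gaps.

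\textbf{Defining $N$ on all of $\BK$.} You construct the map on morphisms by \emph{composing two identifications}, the second of which is the bivariant co-representability $\Hom_{\Madd(e)}(\Uadd(\underline{B}),\Uadd(\underline{A}))\simeq K_0\rep(\underline{B},\underline{A})$. But that isomorphism is only available when $B$ is homotopically finitely presented; for general $B$ you have not produced any map at all, so $N$ is undefined on most of $\BK$. The paper avoids this by passing through the intermediate additive category $\Hmo_0$ (objects: dg categories; morphisms: $K_0\rep(-,-)$). The functor $N_1:\BK\to\Hmo_0$ is built from your first identification, and $N_2:\Hmo_0\to\Madd(e)$ comes for free from the universal property of $\Hmo_0$, with no finiteness hypothesis. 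The hf.p.\ condition then enters only at the end, to show $N_2$ is fully faithful on such objects. A related slip: your first identification is mis-stated. Objects of $\rep(\underline{B},\underline{A})$ are \emph{complexes} of bimodules, not bimodules in degree zero, so $\Rep(B,A)$ only embeds into $\rep(\underline{B},\underline{A})$; that the induced map on $K_0$ is an isomorphism needs an argument (the paper writes down the inverse via alternating sums of components).

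\textbf{Commutativity of the triangle.} Your sentence ``this is precisely Kassel's original construction of $ch_B[M]$'' is exactly the step that hides all the content. Kassel does \emph{not} define $ch_B([P])$ as ``the map on mixed complexes induced by the bimodule $P$'': he chooses a complement $Q$ and an isomorphism $P\oplus Q\simeq A^n$, forms the (non-unital) algebra map $i:B\to M_n(A)$, and sets $ch_B([P])=\mathrm{Tr}\circ C(i)$. Matching this with what $\overline{C}(e)$ does to the class $[\underline{P}]$ requires rewriting $\underline{P}$ in $\Hmo$ as the composite $\underline{e_{11}}^{-1}\circ\underline{i}$ and then checking that $C(\underline{e_{11}})^{-1}$ equals the generalized trace. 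The paper splits the commutativity into two lemmas: $\overline{C}(e)\circ N_2=\widetilde{C}$ (immediate from the universal property of $\Hmo_0$, no computation) and $\widetilde{C}\circ N_1=ch_B$ (the explicit comparison just described). Your sketch collapses both into a one-line appeal to naturality, which does not actually carry the argument.
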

As proved in \cite{Duke} (see \eqref{eq:biv-ext1}\textrm{-}\eqref{eq:biv-ext2}), equivalence \eqref{eq:corep-spt} and isomorphisms \eqref{eq:corep-K} admit natural bivariant extensions. Therefore, Theorem~\ref{thm:3} (combined with Theorem~\ref{thm:2}) shows us that $\Madd(e)$ and $\overline{C}(e)$ should be considered as the correct ``higher-dimensional extensions'' of the additive category $\BK$ and of the bivariant Chern character $ch_B$, respectively. Moreover, in contrast with Kassel's {\em ad hoc} construction, the triangulated category $\Madd(e)$ and the (symmetric monoidal) triangulated functor $\overline{C}(e)$ are characterized by simple and precise universal properties.
\subsection*{Finite coefficients}
In this subsection we assume for simplicity that $k=\bbZ$. Given an object $X$ in a triangulated category $\cT$ and an integer $l\geq 2 $, we write $\cdot l$ for the $l$-fold multiple of the identity morphism in the abelian group $\Hom_{\cT}(X,X)$. Following the topological convention, we define the {\em mod-$l$ Moore object $X/l$ of $X$} as the cone of $\cdot l$, \ie as the object which is part of a distinguished triangle
\begin{equation*}
X \stackrel{\cdot l}{\too} X \too X/l \too X[1]\,.
\end{equation*}
\begin{proposition}\label{prop:4} Let $l\geq 2$ be an integer. Then, for every dg category $\cA$ we have an equivalence of spectra
\begin{equation*}
\bbR \Hom\left(\Uadd(\underline{\bbZ})/l, \Uadd(\cA) \right) \simeq K(\cA;\bbZ/l)[-1]\,,
\end{equation*}
where $K(\cA;\bbZ/l)$ denotes Browder's mod-$l$ algebraic $K$-theory spectrum of $\cA$ (see \S\ref{sub:Kdg}). Moreover, if $l=pq$ with $p$ and $q$ coprime integers, we have an isomorphism $\Uadd(\underline{\bbZ})/l \simeq \Uadd(\underline{\bbZ})/p \oplus \Uadd(\underline{\bbZ})/q$ in $\Madd(e)$.
\end{proposition}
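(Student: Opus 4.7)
My plan is to deduce both assertions by applying the standard homological machinery of triangulated categories to the defining Moore triangle, together with the co-representability property \eqref{eq:corep-spt}.

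For the spectrum equivalence, I would start with the distinguished triangle
\[
\Uadd(\underline{\bbZ}) \stackrel{\cdot l}{\too} \Uadd(\underline{\bbZ}) \too \Uadd(\underline{\bbZ})/l \too \Uadd(\underline{\bbZ})[1]
\]
and apply the spectrum-enriched functor $\bbR\Hom(-,\Uadd(\cA))$ to obtain a fiber sequence
\[
\bbR\Hom(\Uadd(\underline{\bbZ})/l,\Uadd(\cA)) \too \bbR\Hom(\Uadd(\underline{\bbZ}),\Uadd(\cA)) \stackrel{\cdot l}{\too} \bbR\Hom(\Uadd(\underline{\bbZ}),\Uadd(\cA)).
\]
The step that requires a brief justification (rather than a computation) is the identification of the middle map with the $l$-fold multiple: because $\bbR\Hom$ is additive in spectra, precomposition with the $l$-fold multiple of the identity is itself the $l$-fold multiple, and under \eqref{eq:corep-spt} this corresponds precisely to multiplication by $l$ on $K(\cA)$. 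Browder's definition of mod-$l$ algebraic $K$-theory gives the cofiber sequence $K(\cA)\stackrel{\cdot l}{\to} K(\cA) \to K(\cA;\bbZ/l)$, so the fiber of $\cdot l$ is $K(\cA;\bbZ/l)[-1]$, yielding the claimed equivalence.

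For the coprime decomposition, I would write $X:=\Uadd(\underline{\bbZ})$ and apply the octahedral axiom in $\Madd(e)$ to the composable pair $X \stackrel{p}{\to} X \stackrel{q}{\to} X$. This produces a distinguished triangle
\[
X/p \too X/(pq) \too X/q \stackrel{\partial}{\too} X/p\,[1].
\]
The goal then reduces to showing that the connecting morphism $\partial$ vanishes, for such a triangle then splits canonically into a direct sum. To verify this, I would compute $\Hom_{\Madd(e)}(X/q,\,X/p\,[1])$ by applying $\Hom(-,X/p\,[1])$ to the defining triangle of $X/q$. Using \eqref{eq:corep-K} together with Yoneda and the fact that $K_n(\bbZ)=0$ for $n<0$ (Bass--Weibel vanishing for the regular ring $\bbZ$), I get $\Hom(X,X/p\,[1])=0$ and $\Hom(X[1],X/p\,[1])\simeq \Hom(X,X/p)\simeq \bbZ/p$. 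The relevant segment of the long exact sequence therefore becomes
\[
\bbZ/p \stackrel{\cdot q}{\too} \bbZ/p \too \Hom(X/q,X/p\,[1]) \too 0,
\]
and since $\gcd(p,q)=1$, multiplication by $q$ is an isomorphism on $\bbZ/p$, forcing $\Hom(X/q,X/p\,[1])=0$. Hence $\partial=0$ and the octahedral triangle splits to give the desired isomorphism.

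The main obstacle I anticipate is bookkeeping rather than conceptual: one must be careful that the $l$-fold multiple appearing in the defining triangle of $\Uadd(\underline{\bbZ})/l$ is identified on the nose with the multiplication-by-$l$ self-map of the $K$-theory spectrum under the spectrum-level co-representability equivalence \eqref{eq:corep-spt} (and not merely with something $\pi_0$-equivalent). Once this is secured, the remainder reduces to vanishing of negative $K$-groups of $\bbZ$ and to the standard splitting lemma for triangles with null connecting map.
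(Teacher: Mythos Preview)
Your argument is correct in both parts, but it takes a genuinely different route from the paper's.

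For the spectrum equivalence, the paper does not apply $\bbR\Hom(-,\Uadd(\cA))$ directly to the Moore triangle. Instead it exploits the canonical action $\HO(\Spt)\times\Madd\to\Madd$: one identifies $\Uadd(\underline{\bbZ})/l$ with $\bbS/l\otimes\Uadd(\underline{\bbZ})$, uses the tensor--Hom adjunction to rewrite $\bbR\Hom(\bbS/l\otimes\Uadd(\underline{\bbZ}),\Uadd(\cA))$ as $\bbR\Hom_{\Ho(\Spt)}(\bbS/l,K(\cA))$, and then invokes the dualizability of $\bbS/l$ together with $(\bbS/l)^{\vee}\simeq(\bbS/l)[-1]$ to land on $K(\cA;\bbZ/l)[-1]$. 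Your fiber-sequence argument is more elementary and avoids the spectral-action machinery; the paper's approach, on the other hand, explains structurally where the shift $[-1]$ comes from (self-duality of the Moore spectrum up to suspension) and sidesteps the bookkeeping issue you flagged about identifying $\cdot l$ on the nose.

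For the coprime splitting, the paper again works in $\Ho(\Spt)$: from $\bbZ/l\simeq\bbZ/p\times\bbZ/q$ one gets $\bbS/l\simeq\bbS/p\vee\bbS/q$, and then simply applies the sum-preserving functor $-\otimes\Uadd(\underline{\bbZ})$. Your octahedral argument is more hands-on and stays entirely inside $\Madd(e)$; it is longer but self-contained. One small remark: where you invoke ``Bass--Weibel vanishing for the regular ring $\bbZ$'' to obtain $\Hom(X,X/p[1])=0$, in this paper $K(\cA)$ is Waldhausen $K$-theory and hence connective by construction (see \S\ref{sub:spectrum}), so the vanishing of $\pi_{-1}K(\bbZ)$ and $\pi_{-2}K(\bbZ)$ is automatic and no regularity input is needed.
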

Making use of the above triangle (with $X = \Uadd(\underline{\bbZ})$) and of isomorphisms \eqref{eq:corep-K} we recover then Browder's long exact sequence relating $K$-theory with mod-$l$ $K$-theory
$$ \cdots \to K_{n}(\cA) \stackrel{\cdot l}{\to} K_n(\cA) \to K_n(\cA;\bbZ/l) \to K_{n-1}(\cA) \stackrel{\cdot l}{\to} K_{n-1}(\cA) \to \cdots\,.$$
Proposition~\ref{prop:4} gives a precise conceptual characterization of Browder's construction. Roughly speaking, mod-$l$ algebraic $K$-theory is the additive invariant co-represented by the mod-$l$ Moore object of the unit $\Uadd(\uk)$ of $\Madd(e)$. Moreover, Proposition~\ref{prop:4} shows us that we can always assume that $l$ is a prime power.
\begin{example}{(Rings and Schemes)}
Let $l$ be a prime number and $l^\nu$ a power of it. When $\cA=\underline{A}$, with $A$ a ring, the spectrum $K(\underline{A}; \bbZ/l^\nu)$ agrees with the one $K(A; \bbZ/l^\nu)$ constructed originally by Browder in \cite{Browder}. When $\cA= \perfdg(X)$, with $X$ a quasi-compact and quasi-separated scheme (see Example~\ref{ex:schemes}), the spectrum $K(\perf^\dg(X);\bbZ/l^\nu)$ agrees with the one $K(X;\bbZ/l^\nu)$ constructed originally by Thomason-Trobaugh in \cite[\S9.3]{TT}.
\end{example}

\medbreak

\medbreak

\noindent\textbf{Acknowledgments\,:} The author is very grateful to Paul Balmer, Alexander Beilinson, Andrew Blumberg, Guillermo Corti{\~n}as, David Gepner, Christian Haesemeyer, Bernhard Keller, and Randy McCarthy for stimulating conversations. He would like also to thank the departments of mathematics of UCLA and UIC for its hospitality and the Clay Mathematics Institute, the Midwest Topology Network, and the Calouste Gulbenkian Foundation for financial support.
\section{Preliminaries}\label{sec:prelim}
\subsection{Notations}\label{sub:notation}
We will work over a fixed commutative base ring $k$. The category of finite ordered sets and non-decreasing monotone maps will be denoted by $\Delta$. Given an integer $r \geq 1$, we will write $\Delta^{(r)}$ for the product of $\Delta$ with itself $r$ times. The geometric realization of a (multi-)simplicial set and of a (multi-simplicial) category (obtained by first passing ot the nerve $N.$) will be denoted simply by $|-|$. We will write $\Spt$ for the classical category of spectra~\cite{Adams}.
Given a model category $\cM$ in the sense of Quillen~\cite{Quillen} we will write $\Ho(\cM)$ for its homotopy category and $\Map(-,-)$ for its homotopy function complex; see~\cite[Definition~17.4.1]{Hirschhorn}.  Given categories $\cC$ and $\cD$, we will denote by $\Fun(\cC,\cD)$ the category of functors with natural transformations as morphisms. Finally, the adjunctions will be displayed vertically with the left (resp. right) adjoint on the left- (resp. right-) hand side. 

\subsection{Grothendieck derivators}\label{sub:derivators}
We will use the basic language of Grothendieck derivators~\cite{Grothendieck} which can be easily acquired by skimming through \cite[\S1]{CN} or \cite[Appendix~A]{CT,CT1}. An arbitrary derivator will be denoted by $\bbD$. The essential example to keep in mind is the (triangulated) derivator $\bbD=\HO(\cM)$
associated to a (stable) Quillen model category~$\cM$
and defined for every small category $I$ by $$\HO(\cM)(I):=\Ho(\Fun(I^\op,\cM))\,.$$
We will denote by $e$ the $1$-point category with one object and one
identity morphism. Heuristically, the category $\bbD(e)$ is the
basic ``derived" category under consideration in the
derivator~$\bbD$. For instance, if $\bbD=\HO(\cM)$ then
$\bbD(e)$ is the homotopy category $\Ho(\cM)$. As shown in \cite[\S A.3]{CT}, every triangulated derivator $\bbD$ is canonically
enriched over spectra. We will denote by $\bbR\Hom_{\bbD(e)}(X,Y)$
the spectrum of maps from $X$ to $Y$ in $\bbD(e)$. When there is no ambiguity, we
will write simply $\bbR\Hom(X,Y)$.

\section{Waldhausen's constructions}\label{appendix}
In this section we recall some of Waldhausen's \cite{Wald} foundational constructions. This will give us the occasion to fix important notations which will allow us to greatly simplify the proofs of Theorems~\ref{thm:1}, \ref{thm:2}, \ref{thm:3} and of Proposition~\ref{prop:4}.
\subsection{$K$-theory spectrum}\label{sub:spectrum}
Given a non-negative integer $q$ we will denote by $[q]$ the ordered set $\{0<1<\cdots <q\}$ considered as a category.  We will write $\mathrm{Ar}[q]$ for the category of arrows in $[q]$ (\ie the functor category $\Fun([1],[q])$) and $(i/j)$ for the arrow from $i$ to $j$ with $i \leq j$. Let $\cC$ be a category with cofibrations and weak equivalences in the sense of Waldhausen~\cite[\S1.2]{Wald}. We will denote by $S_{q}\cC$ the full subcategory of $\Fun(\mathrm{Ar}[q], \cC)$ whose objects are the functors $A: (i/j) \mapsto A_{i,j}$ such that $A_{j,j}=\ast$, and for every triple $i \leq j \leq k$ the following diagram
$$
\xymatrix{
*+<3.0ex>{A_{i,j}} \ar@{>->}[r] \ar[d] \ar@{}[dr]|{\lrcorner}& A_{i,k} \ar@{->>}[d] \\
\ast \simeq A_{j,j} \ar[r] & A_{j,k}
}
$$
is a co-cartesian. As mentioned by Waldhausen in \cite[page~328]{Wald} an object $A \in S_q\cC$ corresponds to a sequence of cofibrations $(\ast=A_{0,0} \rightarrowtail A_{0,1} \rightarrowtail \cdots \rightarrowtail A_{0,q})$ in $\cC$ together with a choice of subquocients $A_{i,j}=A_{0,j}/A_{0,i}$. Moreover, $S_q\cC$ can be considered as a category with cofibrations and weak equivalences in a natural way; see \cite[page~329]{Wald}. By letting $q$ vary we obtain a simplicial category with cofibrations and weak equivalences $S.\cC$ and by iterating the $S.$-construction we obtain multi-simplicial categories with cofibrations and weak equivalences $S.^{(n)} \cC:= \underbrace{S. \cdots S. C}_{n\, \mathrm{times}}$. Given non-negative integers $q_1, \ldots , q_n$, the category $S_{q_1}\cdots S_{q_n} \cC$ will be denoted by $S^{(n)}_{q_1, \ldots, q_n}\cC$. By passing to the geometric realization of the multi-simplicial categories  $w S.^{(n)}\cC$ of weak equivalences we obtain finally the {\em algebraic $K$-theory spectrum}:
\begin{equation}\label{eq:spectrum}
K(\cC):\quad  \Omega |w S.\cC| \quad |wS.\cC| \quad |wS.^{(2)}\cC| \quad \cdots \quad  |wS.^{(n)}\cC| \quad \cdots \,.
\end{equation}
As shown in \cite[page~330]{Wald} this is a connective $\Omega$-spectrum with structure maps $S^1 \wedge |wS.^{(n)}\cC| \to |wS.^{(n+1)}\cC|$ induced by the natural identification $\cC \simeq S_1\cC$. 
\subsection{Pairings}\label{sub:pairings}
Let $\cA$, $\cB$ and $\cC$ be categories with cofibrations and weak equivalences and $T: \cA \times \cB \to \cC$ a {\em bi-exact functor} in the sense of Waldhausen~\cite[page~342]{Wald}, \ie a functor satisfying the following two conditions: for every pair of object $A \in \cA$ and $B \in \cB$, the induced functors $T(A,-)$ and $T(-,B)$ are exact; for every pair of cofibrations $A \rightarrowtail A'$ and $B \rightarrowtail B'$ in $\cA$ and $\cB$, respectively, the induced map $T(A',B) \amalg_{T(A,B)} T(A,B') \rightarrowtail T(A',B')$ is a cofibration in $\cC$. Given non-negative integers $q$ and $p$, we obtain then an induced bi-exact functor $S_q \cA \times S_p \cB \to S^{(2)}_{q,p}\cC$ which sends the pair
$$ \left((\ast=A_{0,0} \rightarrowtail A_{0,1} \rightarrowtail \cdots \rightarrowtail A_{0,q}), (\ast=B_{0,0} \rightarrowtail B_{0,1} \rightarrowtail \cdots \rightarrowtail B_{0,p})\right) \in S_q \cA \times S_p \cB$$
to the element of $S^{(2)}_{q, p}\cC$ represented by 
$$
\xymatrix@C=2em@R=1.1em{
*+<2.5ex>{T(A_{0,0}, B_{0,0})} \ar@{>->}[d] \ar@{>->}[r] &  *+<2.5ex>{T(A_{0,0},B_{0,1})} \ar@{>->}[r] \ar@{>->}[d]&*+<3.5ex> {\cdots} \ar@{>->}[r] & *+<2.5ex>{T(A_{0,0},B_{0,p})} \ar@{>->}[d] \\
*+<2.5ex>{T(A_{0,1}, B_{0,0})} \ar@{>->}[d] \ar@{>->}[r] &  *+<2.5ex>{T(A_{0,1},B_{0,1})} \ar@{>->}[r] \ar@{>->}[d]&*+<3.5ex>{\cdots} \ar@{>->}[r] & *+<2.5ex>{T(A_{0,1},B_{0,p})} \ar@{>->}[d] \\
*+<3.5ex>{\vdots} \ar@{>->}[d] & *+<3.5ex>{\vdots} \ar@{>->}[d] & *+<3.5ex>{\ddots} & *+<3.5ex>{\vdots} \ar@{>->}[d]  \\
*+<2.5ex>{T(A_{0,q}, B_{0,0})} \ar@{>->}[r] &  *+<2.5ex>{T(A_{0,q},B_{0,1})} \ar@{>->}[r] &*+<3.5ex>{\cdots} \ar@{>->}[r] & *+<3.5ex>{T(A_{0,q},B_{0,p})}\,,
}
$$
with associated subquotients $T(A_{0,j},B_{0,l})/T(A_{0,i},B_{0,k}):= T(A_{j/i}, B_{l/k})$. The iteration of this construction furnish us bi-exact functors
\begin{equation*}
S^{(n)}_{q_1,\ldots,q_n} \cA \times S^{(m)}_{p_1, \ldots, p_m} \cB \too S^{(n+m)}_{\substack{q_1,\ldots, q_n\\p_1, \ldots, p_m}} \cC\,,
\end{equation*}
where $q_1, \ldots, q_n$ and $p_1, \ldots, p_m$ are non-negative integers. By passing to the geometric realization of the multi-simplicial categories of weak equivalences we obtain induced maps
\begin{equation*}
| wS.^{(n)}\cA| \times | wS.^{(m)}\cB | \too | wS.^{(n+m)}\cC|
\end{equation*}
which factor through $| wS.^{(n)}\cA| \wedge | wS.^{(m)}\cB |$. Finally, due to the $\Omega$-structure of the algebraic $K$-theory spectrum, these latter maps assemble themselves in a well-defined algebraic $K$-theory pairing 
$$ K(\cA) \wedge K(\cB) \too K(\cC)\,.$$

\section{Differential graded categories}\label{sub:dg}
In this section we collect the notions and results concerning dg categories which will be used in the proofs of Theorems~\ref{thm:1}, \ref{thm:2}, \ref{thm:3} and of Proposition~\ref{prop:4}.

We will denote by $\cC(k)$ be the category of (unbounded) complexes of $k$-modules. We will use co-homological notation, \ie the differential increases the degree. A {\em differential graded (=dg) category} is a category enriched over $\cC(k)$ (morphisms sets are complexes) in such a way that composition fulfills the Leibniz rule\,: $d(f\circ g)=(df)\circ g+(-1)^{\textrm{deg}(f)}f\circ(dg)$. Given a $k$-algebra $A$, we will write $\underline{A}$ for the dg category with one object and with $A$ as the dg algebra of endomorphisms (concentrated in degree zero). For a survey article, we invite the reader to consult Keller's ICM adress~\cite{ICM}.
\subsection{Dg (bi-)modules}\label{sub:modules}
Let $\cA$ be a dg category. The {\em opposite dg category} $\mathcal{A}^\op$ has the same objects as $\cA$ and complexes of morphisms given by
$\mathcal{A}^\op(x,y):=\mathcal{A}(y,x)$. 
Recall from~\cite[\S3.1]{ICM} that a {\em
right dg $\cA$-module} (or simply a $\cA$-module) is a dg functor $M: \cA^\op \rightarrow
\cC_{\dg}(k)$ with values in the dg category $\cC_{\dg}(k)$ of
complexes of $k$-modules. We will denote by $\cC(\cA)$ (resp.\ by
$\cC_{\dg}(\cA)$) the category (resp.\ dg category) of $\cA$-modules. Recall from~\cite[Theorem~3.2]{ICM} that $\cC(\cA)$ carries a standard projective model structure whose weak equivalences are the quasi-isomorphisms. The {\em derived category $\cD(\cA)$ of $\cA$} is the localization of $\cC(\cA)$ with respect to the class of quasi-isomorphisms.
\begin{notation}\label{not:hats}
In order to simplify the exposition, we will denote by $\widehat{\cA}$ (resp. by $\widehat{\cA}_\dg$) be the full subcategory of $\cC(\cA)$ (resp. of $\cC_\dg(\cA)$) formed by the $\cA$-modules which are cofibrant and that become compact in $\cD(\cA)$. As shown in \cite[\S3]{DS} the category $\widehat{\cA}$, endowed with the cofibrations and weak equivalences of the projective model structure, is a category with cofibrations and weak equivalences in the sense of Waldhausen~\cite{Wald}. The objects of $\widehat{\cA}$ (and of $\widehat{\cA}_\dg$) will be called the {\em perfect} $\cA$-modules. 
\end{notation}

Recall from \cite[\S2.3]{ICM} that the {\em tensor product} $\cA \otimes \cB$ of two dg categories is defined as follows: the set of objects is the cartesian product of the set of objects of $\cA$ and $\cB$ and $(\cA \otimes \cB)((x,z),(y,w)):= \cA(x,y) \otimes \cA(z,w)$. By a {\em right dg $\cA\text{-}\cB$-bimodule} (or simply a $\cA\text{-}\cB$-bimodule) we mean a dg functor $\cA^\op \otimes \cB \to \cC_\dg(k)$.
\subsection{Derived Morita equivalences}\label{sub:Morita}
A dg functor $F:\cA \to \cB$ is called a {\em derived Morita equivalence} if it induces an equivalence $\cD(\cB) \stackrel{\sim}{\to} \cD(\cA)$ of triangulated categories; see \cite[\S4.6]{ICM}. Recall from~\cite[Theorem~4.10]{ICM} that $\dgcat$ carries a Quillen model structure, whose weak equivalences are the {\em derived Morita equivalences}.
We will denote by $\Hmo$ the homotopy category hence obtained. 
The tensor product of dg categories can be naturally derived $-\otimes^\bbL-$, thus giving rise to a symmetric monoidal structure on $\Hmo$. Given dg categories $\cA$ and $\cB$, let $\rep(\cB,\cA)$ be the full triangulated subcategory of $\cD(\cB^\op \otimes^\bbL \cA)$ spanned by the cofibrant  $\cB\text{-}\cA$-bimodules $X$ such that for every object $x \in \cB$ the associated $\cA$-module $X(x,-)$ becomes compact in $\cD(\cA)$. Recall from \cite[\S4.2 and \S4.6]{ICM} that there is a natural bijection between $\Hom_{\Hmo}(\cB,\cA)$ and the isomorphism classes of objects in $\rep(\cB,\cA)$. Moreover, composition in $\Hmo$ corresponds to the (derived) tensor product of bimodules. Let $\cR(\cB,\cA)$ be the subcategory of $\cC(\cB^\op\otimes^\bbL \cA)$ with the same objects as $\rep(\cB,\cA)$ and whose morphisms are the quasi-isomorphisms. As explained in \cite[\S4]{ICM} there is a canonical weak equivalence of simplicial sets between $\Map_\Hmo(\cB,\cA)$ and the nerve of the category $\cR(\cB,\cA)$. Finally, recall from \cite[\S4.3]{ICM} that the (derived) symmetric monoidal structure on $\Hmo$ is closed. Given dg categories $\cA$ and $\cB$, the internal Hom-functor $\repdg(\cB,\cA)$ is the full dg subcategory of $\cC_\dg(\cB^\op \otimes^\bbL \cA)$ with the same objects as $\rep(\cB,\cA)$.
\subsection{Algebraic K-theories}\label{sub:Kdg}
Let $\cA$ be a dg category. Recall from \cite[\S5.2]{ICM} that the {\em algebraic $K$-theory spectrum $K(\cA)$ of $\cA$} is the spectrum \eqref{eq:spectrum} associated to the category with cofibrations and weak equivalences $\widehat{\cA}$. Now, assume that $k=\bbZ$ and consider the distinguished triangle $\bbS \stackrel{\cdot l}{\to} \bbS \to \bbS /l \to \bbS[1]$ in $\Ho(\Spt)$, where $l\geq 2$ is an integer, $\bbS$ is the sphere spectrum, and $\cdot l$ is the $l$-fold multiple of the identity morphism. Following Browder~\cite{Browder}, the {\em mod-$l$ algebraic $K$-theory spectrum $K(\cA;\bbZ/l)$ of $\cA$} is the derived smash product spectrum $\bbS/l\wedge^\bbL K(\cA)$.

\section{Proof of Theorem~\ref{thm:1}}
\begin{proof}
The proof will consist on showing that the maps
\begin{equation}\label{eq:maps}
K(\cA)_n \times K(\cB)_m \too K(\cA \otimes \cB)_{n+m}\qquad n, m \geq 0
\end{equation}
associated to the pairings \eqref{eq:pairing1} and \eqref{eq:pairing3} agree up to homotopy; note that \eqref{eq:pairing1} and \eqref{eq:pairing3} are naturally defined on $K(\cA) \times K(\cB)$. We start by observing that it suffices to study the cases $n, m \geq 1$. Given an arbitrary dg category $\cC$ and an integer $r\geq 0$, let us write $K(\cC)[r]$ for the spectrum $K(\cC)[r]_n:= K(\cC)_{n+r}$ with the naturally induced structure maps. As explained in \S\ref{sub:pairings}, \eqref{eq:pairing1} is obtained by looping twice the pairing
$$ K(\cA)[1] \wedge K(\cB)[1] \too K(\cA\otimes \cB)[2]$$
induced by the bi-exact functor \eqref{eq:bi-exact}. In what concerns \eqref{eq:pairing3} the same phenomenon holds: the category $\Madd(e)$ is triangulated and its monoidal structure is homotopy colimit preserving, which implies that \eqref{eq:pairing3} is obtained by looping twice the pairing
$$ \bbR\Hom(\Uadd(\uk), \Uadd(\cA)[1]) \wedge \bbR\Hom(\Uadd(\uk), \Uadd(\cB)[1]) \too \bbR\Hom(\Uadd(\uk), \Uadd(\cA\otimes \cB)[2])\,.$$
Therefore, we can assume that $n$ and $m$ are (fixed) integers $\geq 1$. Now, recall from \S\ref{sub:Kdg} that we have the equalities:
\begin{eqnarray*}
K(\cA)_n= |wS.^{(n)}\widehat{\cA}| & K(\cB)_m= |wS.^{(m)}\widehat{\cB}| & K(\cA\otimes \cB)_{n+m}= |wS.^{(n+m)}\widehat{(\cA\otimes \cB)}|\,.
\end{eqnarray*} 
Following \S\ref{sub:pairings}, the maps \eqref{eq:maps} associated to the pairing \eqref{eq:pairing1} correspond to the ones
\begin{equation*}
|wS.^{(n)}\widehat{\cA}| \times |wS.^{(m)}\widehat{\cB}| \too |wS.^{(n+m)}\widehat{(\cA\otimes \cB)}|
\end{equation*} 
induced by the bi-exact functor \eqref{eq:bi-exact}. By construction, these latter maps can be furthermore expressed as the following homotopy colimit
\begin{equation}\label{eq:colimit}
\underset{\substack{ q_1, \ldots, q_n\\p_1,\ldots , p_m}}{\mathrm{hocolim}} \left(|wS^{(n)}_{q_1,\ldots, q_n}\widehat{\cA}| \times |wS^{(m)}_{p_1,\ldots, p_m}\widehat{\cB}| \too |wS^{(n+m)}_{\substack{q_1,\ldots, q_n\\p_1, \ldots, p_m}}\widehat{(\cA\otimes \cB)}|  \right)
\end{equation}
induced by the bi-exact functors
\begin{equation}\label{eq:k-exact}
S^{(n)}_{q_1,\ldots, q_n}\widehat{\cA} \times S^{(m)}_{p_1,\ldots, p_m}\widehat{\cB} \too S^{(n+m)}_{\substack{q_1,\ldots, q_n\\p_1, \ldots, p_m}}\widehat{(\cA\otimes \cB)}\,,
\end{equation}
where $q_1, \ldots, q_n$ and $p_1, \ldots, p_n$ are non-negative integers. The remainder of the proof consists of showing that the maps \eqref{eq:maps} associated to the pairing \eqref{eq:pairing3} can also be expressed as the above homotopy colimit \eqref{eq:colimit}. We start by observing that the bi-exact functors \eqref{eq:k-exact} admit a natural dg enrichment. Given an abstract dg category $\cC$, the dg enrichment $\cC_\dg(k)$ of $\cC(k)$ gives rise to a dg enrichment $\widehat{\cC}_\dg$ of $\widehat{\cC}$; see Notation~\ref{not:hats}. By construction, $S_q\widehat{\cC}$ and all the maps used in the $S.$-construction inherit a natural dg enrichment from $\widehat{\cC}_\dg$. In sum, we obtain well-defined multi-simplicial dg categories $S.^{(n)}\widehat{\cC}_\dg$. Under this dg enrichment the above bi-exact functors \eqref{eq:k-exact} become $k$-bilinear in the differential graded sense. Therefore, they give rise to well-defined dg functors
\begin{equation}\label{eq:keymaps}
S^{(n)}_{q_1, \ldots , q_n} \widehat{\cA}_\dg \otimes S^{(m)}_{p_1, \ldots , p_m} \widehat{\cB}_\dg \too S^{(n+m)}_{\substack{q_1, \ldots, q_n\\ p_1, \ldots, p_m}} \widehat{(\cA\otimes \cB)}_\dg
\end{equation}
which assemble themselves in a morphism
\begin{equation}\label{eq:keymap}
\Phi^{(n+m)}: S.^{(n)}\widehat{\cA}_\dg \otimes S.^{(m)}\widehat{\cB}_\dg \too S.^{(n+m)}\widehat{(\cA\otimes \cB)}_\dg
\end{equation} 
in $\HO(\dgcat)(\Delta^{(n+m)})$. Now, recall from \cite[\S15]{Duke} that the universal additive invariant is defined as the following composition
\begin{equation*}
\Uadd: \HO(\dgcat) \stackrel{\Uaddu}{\too} \Maddu \stackrel{\varphi}{\too} \Madd\,.
\end{equation*}
The derivator $\Maddu$ is the {\em unstable} analogue of $\Madd$ and the morphism $\varphi$ corresponds to the stabilization procedure. Let us write $\Sigma$ for the suspension functor in the pointed base category $\Maddu(e)$. Recall from \cite[\S1]{CN} that since $\Maddu$ is a derivator, we have the following adjunction
$$
\xymatrix{
\Maddu(\Delta^{(r)}) \ar@<-1ex>[d]_{\pi_!}\\
\Maddu(e) \ar@<-1ex>[u]_{\pi^\ast}
}
$$
associated to the projection functor $\pi: \Delta^{(r)} \to e$. 
\begin{lemma}\label{lem:aux2}
Let $\cC$ be a dg category. Then, for every integer $r \geq 1$ we have a natural isomorphism
$ \pi_!\, \Uaddu(S.^{(r)}\widehat{\cC}_\dg) \simeq \Sigma^{(r)} \Uaddu(\widehat{\cC}_\dg)$.
\end{lemma}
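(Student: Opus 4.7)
The plan is to prove the lemma by induction on $r$, with the fundamental work concentrated in the base case $r=1$.

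\textbf{Base case $r=1$.} One wants $\pi_! \Uaddu(S_\bullet \widehat{\cC}_\dg) \simeq \Sigma \Uaddu(\widehat{\cC}_\dg)$. The strategy is to identify the simplicial object $\Uaddu(S_\bullet \widehat{\cC}_\dg)$ in $\Maddu(\Delta)$ with the standard simplicial bar model for the suspension of $\Uaddu(\widehat{\cC}_\dg)$. Recall from \S\ref{sub:spectrum} that $S_q\widehat{\cC}_\dg$ parametrizes flags of cofibrations $(\ast = A_{0,0}\rightarrowtail A_{0,1}\rightarrowtail\cdots\rightarrowtail A_{0,q})$ together with chosen subquotients; the inclusion of a sub-flag and the passage to the quotient flag exhibit $S_q\widehat{\cC}_\dg$ as an iterated split extension of $q$ copies of $\widehat{\cC}_\dg$ in the sense of \cite[Definition~13.1]{Duke}. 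Since $\Uaddu$ sends split exact sequences of dg categories to direct sums, we obtain natural equivalences
$$\Uaddu(S_q\widehat{\cC}_\dg) \simeq \Uaddu(\widehat{\cC}_\dg)^{\oplus q}$$
in $\Maddu(e)$, compatibly with the simplicial face and degeneracy maps of $S_\bullet\widehat{\cC}_\dg$. This identifies $\Uaddu(S_\bullet \widehat{\cC}_\dg)$ with the bar-construction simplicial object on $\Uaddu(\widehat{\cC}_\dg)$, whose homotopy colimit in the pointed derivator $\Maddu$ is the suspension. Hence $\pi_!\Uaddu(S_\bullet\widehat{\cC}_\dg) \simeq \Sigma\Uaddu(\widehat{\cC}_\dg)$.

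\textbf{Inductive step.} Assume the statement for $r-1$ and factor the projection $\pi : \Delta^{(r)} \to e$ as $\pi = \bar{\pi}\circ \rho$, where $\rho : \Delta^{(r)}\to\Delta^{(r-1)}$ omits the last simplicial factor and $\bar\pi : \Delta^{(r-1)} \to e$. By functoriality of homotopy left Kan extensions, $\pi_! = \bar{\pi}_! \circ \rho_!$. The iterative nature of the $S_\bullet$-construction identifies $S.^{(r)}\widehat{\cC}_\dg$ with $S_\bullet\bigl(S.^{(r-1)}\widehat{\cC}_\dg\bigr)$, where the outer $S_\bullet$ records the last simplicial factor. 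Applying the base case $r=1$ uniformly over $\Delta^{(r-1)}$ (this is legitimate because derivators satisfy base change and the base-case argument works universally over any parameter category) yields
$$\rho_!\, \Uaddu(S.^{(r)}\widehat{\cC}_\dg) \simeq \Sigma\, \Uaddu(S.^{(r-1)}\widehat{\cC}_\dg).$$
Since $\Sigma$ is itself a homotopy colimit in the pointed derivator $\Maddu$, it commutes with $\bar{\pi}_!$, and the induction hypothesis concludes:
$$\pi_! \Uaddu(S.^{(r)}\widehat{\cC}_\dg) \simeq \bar{\pi}_! \Sigma\, \Uaddu(S.^{(r-1)}\widehat{\cC}_\dg) \simeq \Sigma\bar{\pi}_! \Uaddu(S.^{(r-1)}\widehat{\cC}_\dg) \simeq \Sigma^{(r)} \Uaddu(\widehat{\cC}_\dg).$$

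\textbf{Main obstacle.} The subtle point is the simplicially coherent identification $\Uaddu(S_\bullet\widehat{\cC}_\dg) \simeq B\Uaddu(\widehat{\cC}_\dg)$ in the base case. The object-wise equivalences $\Uaddu(S_q\widehat{\cC}_\dg) \simeq \Uaddu(\widehat{\cC}_\dg)^{\oplus q}$ follow directly from the split exactness of the flag filtration, but one must verify that the system of equivalences is compatible with every face and degeneracy map, so as to match the canonical bar simplicial object whose realization is $\Sigma\Uaddu(\widehat{\cC}_\dg)$. This is exactly what the additivity axiom for the \emph{universal} invariant $\Uaddu$ guarantees when applied naturally to the filtration $A_{0,0}\rightarrowtail A_{0,1}\rightarrowtail\cdots\rightarrowtail A_{0,q}$.
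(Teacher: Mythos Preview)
Your argument is correct but takes a genuinely different route from the paper's, in both the base case and the inductive step.

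For the base case you identify $\Uaddu(S_\bullet\widehat{\cC}_\dg)$ levelwise with the simplicial bar construction on $\Uaddu(\widehat{\cC}_\dg)$ via iterated additivity. The paper instead simply invokes \cite[Proposition~14.11]{Duke}, whose proof (as one sees from how the paper reuses it in the inductive step) proceeds through the path-object sequence $\widehat{\cC}_\dg \to PS_\bullet\widehat{\cC}_\dg \to S_\bullet\widehat{\cC}_\dg$: this is levelwise split exact with simplicially contractible middle term, so after $\pi_!\Uaddu$ it becomes a cofiber sequence with vanishing middle, yielding the suspension directly. For the inductive step the paper repeats exactly this move one level up, using McCarthy's sequence $S.^{(r)}\widehat{\cC}_\dg \to PS.^{(r+1)}\widehat{\cC}_\dg \to S.^{(r+1)}\widehat{\cC}_\dg$ in $\HO(\dgcat)(\Delta^{(r+1)})$ to obtain $\pi_!\Uaddu(S.^{(r+1)}\widehat{\cC}_\dg)\simeq\Sigma\bigl(\pi_!\Uaddu(S.^{(r)}\widehat{\cC}_\dg)\bigr)$. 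You instead factor $\pi_! = \bar\pi_!\circ\rho_!$ and reapply the base case parametrically over $\Delta^{(r-1)}$.

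What each approach buys: the paper's path-object route is tailor-made for the derivator framework, since cofiber sequences are intrinsic to triangulated derivators and one never has to check that a system of levelwise equivalences assembles into an honest equivalence in $\Maddu(\Delta)$ --- precisely the coherence issue you correctly isolate as the main obstacle. Your bar-construction approach is more conceptually transparent (it makes visible \emph{why} $S_\bullet$ produces a suspension) and, once the coherence is nailed down using the fact that the splittings come from genuine dg functors, is equally valid; it also makes the parametric inductive step very clean.
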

\begin{proof}
The proof goes by induction on $r$. The case $r=1$ was proved in \cite[Proposition~14.11]{Duke}. Now, suppose we have a natural isomorphism
\begin{equation}\label{eq:hyp}
\pi_!\, \Uaddu(S.^{(r)}\widehat{\cC}_\dg) \simeq \Sigma^{(r)} \Uaddu (\widehat{\cC}_\dg)\,.
\end{equation}
Following McCarthy~\cite[\S3.3]{McCarthy}, we consider the sequence of morphisms
$$ S.^{(r)} \widehat{\cC}_\dg \too PS.^{(r+1)}\widehat{\cC}_\dg \too S.^{(r+1)}\widehat{\cC}_\dg$$
in $\HO(\dgcat)(\Delta^{(r+1)})$, where $S.^{(r)}\widehat{\cC}_\dg$ is constant in one simplicial direction and $PS.^{(r+1)}\widehat{\cC}_\dg$ denotes the simplicial path object of $S.^{(r+1)}\widehat{\cC}_\dg$. As in the proof of \cite[Proposition~14.11]{Duke} we observe that the above sequence of morphisms is split exact at each component and that $PS.^{(r+1)}\widehat{\cC}_\dg$ is simplicially contractible. Hence, we obtain the following natural isomorphism
\begin{equation}\label{eq:nat}
\pi_!\, \Uaddu(S.^{(r+1)}\widehat{\cC}_\dg) \simeq \Sigma\left( \pi_!\, \Uaddu(S.^{(r)}\widehat{\cC}_\dg)\right)\,.
\end{equation}
By combining \eqref{eq:nat} with \eqref{eq:hyp} we conclude that
$\pi_!\, \Uaddu (S.^{(r+1)}\widehat{\cC}_\dg)$ is naturally isomorphic to $\Sigma^{(r+1)} \Uaddu(\widehat{\cC}_\dg)$, which achieves the proof.
\end{proof}
\begin{lemma}\label{lem:aux3}
The induced morphism $\pi_!\, \Uaddu(\Phi^{(n+m)})$ (see \eqref{eq:keymap}) becomes invertible in $\Maddu(e)$.
\end{lemma}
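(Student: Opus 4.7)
The plan is to apply $\pi_!\,\Uaddu$ to both source and target of $\Phi^{(n+m)}$ and reduce the statement to the fact that the canonical dg functor
$$\widehat{\cA}_\dg \otimes \widehat{\cB}_\dg \too \widehat{(\cA\otimes\cB)}_\dg, \qquad (M,N)\mapsto M\otimes_k N,$$
is a derived Morita equivalence. This is a known fact under the $k$-flatness hypothesis on $\cA$ or $\cB$ assumed in Theorem~\ref{thm:1}.

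First, applying Lemma~\ref{lem:aux2} with $r=n+m$ and $\cC=\cA\otimes\cB$ identifies the target as
$$\pi_!\,\Uaddu(S.^{(n+m)}\widehat{(\cA\otimes\cB)}_\dg) \simeq \Sigma^{(n+m)}\,\Uaddu(\widehat{(\cA\otimes\cB)}_\dg).$$
For the source, I would combine three ingredients: (i) $\Uaddu$ is symmetric monoidal with respect to $-\otimes^\bbL-$ on $\HO(\dgcat)$, the unstable counterpart of~\eqref{eq:catmon}; (ii) the induced symmetric monoidal structure on $\Maddu$ is homotopy colimit preserving in each variable, so $\pi_!$ commutes with the external tensor product of diagrams (a Fubini-type identity at the level of derivators); and (iii) Lemma~\ref{lem:aux2} applied separately to $\widehat{\cA}_\dg$ and to $\widehat{\cB}_\dg$. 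Chaining them yields
$$\pi_!\,\Uaddu(S.^{(n)}\widehat{\cA}_\dg \otimes S.^{(m)}\widehat{\cB}_\dg) \simeq \Sigma^{(n+m)}\bigl(\Uaddu(\widehat{\cA}_\dg)\otimes^\bbL \Uaddu(\widehat{\cB}_\dg)\bigr) \simeq \Sigma^{(n+m)}\,\Uaddu(\widehat{\cA}_\dg \otimes^\bbL \widehat{\cB}_\dg),$$
the last step being (i) once more.

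Under these natural identifications, $\pi_!\,\Uaddu(\Phi^{(n+m)})$ is identified with $\Sigma^{(n+m)}\,\Uaddu$ applied to the canonical morphism $\widehat{\cA}_\dg \otimes^\bbL \widehat{\cB}_\dg \to \widehat{(\cA\otimes\cB)}_\dg$ induced by $(M,N)\mapsto M\otimes_k N$ (combined with the isomorphism $\cA\otimes^\bbL\cB \simeq \cA\otimes\cB$ coming from $k$-flatness). Since this morphism is a derived Morita equivalence, $\Uaddu$ sends it to an isomorphism in $\Maddu(e)$, and consequently $\pi_!\,\Uaddu(\Phi^{(n+m)})$ is invertible.

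The main obstacle will be the bookkeeping in the previous paragraph: one must verify that the monoidal coherence for $\Uaddu$, the Fubini-type commutation of $\pi_!$ with the external tensor product, and the naturality of Lemma~\ref{lem:aux2} assemble so as to identify $\pi_!\,\Uaddu(\Phi^{(n+m)})$ with precisely the map induced by the tensor product of perfect modules. Given how $\Phi^{(n+m)}$ was built levelwise from the bi-exact functors~\eqref{eq:k-exact}, this identification is expected to be routine but nontrivially multi-indexed.
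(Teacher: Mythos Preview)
Your proposal is correct and follows essentially the same route as the paper: the paper also uses the symmetric monoidality of $\Uaddu$, the homotopy colimit preservation of the monoidal structure on $\Maddu$ (to pass $\pi_!$ and $\Sigma$ through the tensor), Lemma~\ref{lem:aux2} on each factor, and the derived Morita equivalence $\widehat{\cA}_\dg \otimes^\bbL \widehat{\cB}_\dg \simeq \widehat{(\cA\otimes\cB)}_\dg$ (together with $k$-flatness to remove the $\bbL$). The bookkeeping you flag as the ``main obstacle'' is in fact glossed over in the paper, which simply asserts that the chain of natural isomorphisms witnesses the invertibility of $\pi_!\,\Uaddu(\Phi^{(n+m)})$; so your caution there is appropriate but does not reflect a divergence in strategy.
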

\begin{proof}
The derivator $\Maddu$ carries a homotopy colimit preserving symmetric monoidal structure and both morphisms $\Uaddu$ and $\varphi$ are symmetric monoidal. Therefore, the proof is a consequence of the following natural isomorphisms:
\begin{eqnarray}
\pi_!\, \Uaddu(S.^{(n)} \widehat{\cA}_\dg \otimes S.^{(m)}\widehat{\cB}_\dg) & \simeq & \pi_!\, \Uaddu(S.^{(n)} \widehat{\cA}_\dg \otimes^\bbL S.^{(m)}\widehat{\cB}_\dg) \label{eq:number6} \\
 & \simeq & 
\pi_!\, \Uaddu(S.^{(n)} \widehat{\cA}_\dg) \otimes \pi_!\, \Uaddu(S.^{(m)}\widehat{\cB}_\dg)  \label{eq:number1} \\
& \simeq & \Sigma^{(n)} \Uaddu(\widehat{\cA}_\dg) \otimes \Sigma^{(m)} \Uaddu(\widehat{\cB}_\dg) \label{eq:number2} \\
& \simeq &  \Sigma^{(n+m)} \Uaddu(\widehat{\cA}_\dg \otimes^\bbL \widehat{\cB}_\dg) \label{eq:number3} \\
& \simeq & \Sigma^{(n+m)} \Uaddu(\widehat{(\cA \otimes \cB)}_\dg) \label{eq:number4} \\
& \simeq & \pi_!\, \Uaddu(S.^{(n+m)}\widehat{(\cA \otimes \cB)}_\dg)\,. \label{eq:number5}
\end{eqnarray} 
Isomorphism \eqref{eq:number6} follows from the assumption that $\cA$ or $\cB$ (and hence $S.^{(n)}\widehat{\cA}_\dg$ or $S.^{(m)}\widehat{\cB}_\dg$) is $k$-flat. Isomorphisms \eqref{eq:number1} and \eqref{eq:number3} follow from the fact that the morphism $\Uaddu$ is symmetric monoidal and that the symmetric monoidal structure on $\Maddu(e)$ is homotopy colimit preserving; note that $\pi_!$ and $\Sigma$ are two (different) examples of homotopy colimits. Isomorphisms \eqref{eq:number2} and \eqref{eq:number5} follow from Lemma~\ref{lem:aux2}. Finally, isomorphism \eqref{eq:number4} follows from the natural derived Morita equivalences $\widehat{\cA}_\dg \otimes^\bbL \widehat{\cB}_\dg\simeq \widehat{\cA}_\dg \otimes \widehat{\cB}_\dg \simeq \cA \otimes \cB \simeq \widehat{(\cA\otimes \cB)}_\dg$. 
\end{proof}
Now, let us consider the following (composed) maps
\begin{equation}\label{eq:pair-aux}
\xymatrix@C=2em@R=1em{
 | \Map(\Uaddu(\uk), \pi_!\, \Uaddu(S.^{(n)} \widehat{\cA}_\dg))| \times | \Map(\Uaddu(\uk), \pi_!\, \Uaddu(S.^{(m)} \widehat{\cB}_\dg))|  \ar[d]   \\
 | \Map(\Uaddu(\uk), \pi_!\, \Uaddu(S.^{(n)} \widehat{\cA}_\dg \otimes S.^{(m)}\widehat{\cB}_\dg))| \ar[d]^-{\simeq} \\
| \Map(\Uaddu(\uk), \pi_!\, \Uaddu(S.^{(n+m)} \widehat{(\cA\otimes\cB)}_\dg))|
}
\end{equation}
The ``upper'' map is the one induced by the symmetric monoidal structure on $\Maddu$ and by the natural isomorphism
$$ \pi_!\, \Uaddu(S.^{(n)} \widehat{\cA}_\dg) \otimes \pi_!\, \Uaddu(S.^{(m)}\widehat{\cB}_\dg) \simeq \pi_!\, \Uaddu(S.^{(n)} \widehat{\cA}_\dg \otimes S.^{(m)}\widehat{\cB}_\dg)\,.$$
The ``lower'' map is the isomorphism induced by $\pi_!\, \Uaddu(\Phi^{(n+m)})$; see Lemma~\ref{lem:aux3}. A careful analysis of the proof of~\cite[Theorem~15.9]{Duke} show us that given an arbitrary dg category $\cC$ and an integer $r \geq 1$, we have natural weak equivalences
\begin{equation*}
K(\cC)_r\simeq \bbR\Hom(\Uadd(\uk), \Uadd(\cC))_r  \simeq  |\Map(\Uaddu(\uk), \pi_!\, \Uaddu (S.^{(r)}\widehat{\cC}_\dg))|\,.
\end{equation*}
Therefore, since the morphism $\varphi: \Maddu \to \Madd$ is symmetric monoidal, we conclude that the maps \eqref{eq:maps} associated to the pairing \eqref{eq:pairing3} agree with the above (composed) maps \eqref{eq:pair-aux}. Through a simple iteration of \cite[Proposition~14.12]{Duke} we obtain moreover a natural weak equivalence
$$ |\Map(\Uaddu(\uk), \pi_!\, \Uaddu (S.^{(r)} \widehat{\cC}_\dg))| \simeq \underset{q_1, \ldots, q_r}{\mathrm{hocolim}}\, | \Map(\uk, S^{(r)}_{q_1, \ldots , q_r}\widehat{\cC}_\dg)|\,.$$
This description, combined with the fact that the functor 
$$\Uaddu(e):\HO(\dgcat)(e)=\Hmo \too \Maddu(e)$$
is symmetric monoidal, allows us to conclude that the above (composed) maps \eqref{eq:pair-aux} can be expressed as the following homotopy colimits
$$ \underset{\substack{q_1, \ldots, q_n \\p_1, \ldots, p_m}}{\mathrm{hocolim}} \left( 
\begin{array}{c}
\underset{\downarrow}{|\Map(\uk, S^{(n)}_{q_1, \ldots, q_n}\widehat{\cA}_\dg)| \times | \Map(\uk, S^{(m)}_{p_1, \ldots, p_m}\widehat{\cB}_\dg)|} \\
\underset{\downarrow}{|\Map(\uk, S^{(n)}_{q_1, \ldots, q_n}\widehat{\cA}_\dg \otimes S^{(m)}_{p_1, \ldots, p_m}\widehat{\cB}_\dg)|} \\
 |\Map(\uk, S^{(n+m)}_{\substack{q_1, \ldots, q_n \\p_1, \ldots, p_m}}\widehat{(\cA\otimes \cB)}_\dg)| 
\end{array}
\right)\,.
$$
The ``upper'' map is the one induced by the symmetric monoidal structure on $\Hmo$. The ``lower'' map is the one obtained by applying the functor $|\Map(\underline{k},-)|$ to the dg functor \eqref{eq:keymaps}. Now, in order to conclude the proof it remains to show that this homotopy colimit agrees with the one described in \eqref{eq:colimit}. Note that given an arbitrary dg category $\cC$, the category $\cR(\uk,\cC)$ (see \S\ref{sub:Morita}) identifies naturally with $\widehat{\cC}$ which implies that $|\Map(\uk, \cC)| \simeq | w \widehat{\cC}|$. In the particular case of the dg categories $S^{(r)}_{q_1, \ldots, q_r}\widehat{\cC}_\dg$, the associated categories of perfect modules are naturally identified with $S^{(r)}_{q_1, \ldots, q_r}\widehat{\cC}$. Therefore $|\Map(\uk, S^{(r)}_{q_1, \ldots, q_r}\widehat{\cC}_\dg)| \simeq | w S^{(r)}_{q_1, \ldots, q_r}\widehat{\cC}|$ and so the maps in the above homotopy colimit are obtained by applying $|w-|$ to the composed functors
$$ S^{(n)}_{q_1, \ldots, q_n}\widehat{\cA} \times S^{(m)}_{p_1, \ldots, p_m}\widehat{\cB}  \too \widehat{S^{(n)}_{q_1, \ldots, q_n}\widehat{\cA}_\dg\otimes S^{(m)}_{p_1, \ldots, p_m}\widehat{\cB}_\dg} \too S^{(n+m)}_{\substack{q_1, \ldots, q_n \\p_1, \ldots, p_m}}\widehat{(\cA\otimes \cB)}\,.$$
The left-hand side functor is the one induced by the symmetric monoidal structure on $\Hmo$. The right-hand side functor is the classical extension of scalars associated to the dg functor \eqref{eq:keymaps}. Finally, a direct inspection show us that the above composed functors agree with the ones described in \eqref{eq:k-exact}. This implies that the above homotopy colimit agrees with the one described in \eqref{eq:colimit} and so the proof is finished.
\end{proof}
\section{Proof of Theorem~\ref{thm:2}}
\begin{proof}
We start by showing that $ch^k_n$ agrees with the $n^{\textrm{th}}$ Dennis trace map. Thanks to Keller-McCarthy's delooping theorem (see \cite[\S1.13]{cyclic} and \cite[Corollary~3.6.3]{McCarthy}) the spectrum $\bbR\Hom_{\cD(k)}(k,HH(\underline{A}))$ can be described as follows:
\begin{equation}\label{eq:spec-HH}
\Omega|HH_{DK}(S.\widehat{\uA}_\dg)| \quad |HH_{DK}(S.\widehat{\uA}_\dg)| \quad \cdots \quad |HH_{DK}(S^{(n)}.\widehat{\uA}_\dg)| \quad \cdots\,.
\end{equation}
Some explanations are in order. As in the proof of Theorem~\ref{thm:1}, $S.^{(n)}\widehat{\uA}_\dg$ can be considered as a multi-simplicial dg category in a natural way. By taking at each degree the simplicial $k$-module ($HH_{DK}$), which (via the Dold-Kan equivalence) corresponds to the Hochschild homology complex ($HH$), we obtain a multi-simplicial $k$-module $HH_{DK}(S.^{(n)}\widehat{\uA}_\dg)$ whose geometric realization we denote by $|HH_{DK}(S.^{(n)}\widehat{\uA}_\dg)|$. The structure maps of \eqref{eq:spec-HH} are induced by the fibration sequences
$$ |HH_{DK}(S.^{(n)}\widehat{\uA}_\dg)| \too |HH_{DK}(PS.^{(n)}\widehat{\uA}_\dg)| \too |HH_{DK}(S.^{(n+1)}\widehat{\uA}_\dg)| \,,$$
where the middle term is contractible; see \cite[Theorem~3.3.3]{McCarthy}. Now, since $K(\uA)$ (see \S\ref{sub:Kdg}) and \eqref{eq:spec-HH} are connective $\Omega$-spectra, it suffices to study the restriction of the map of spectra $ch^k$ (see \eqref{eq:diag-spt}) to its first component. Concretly, we need to study the induced map
\begin{equation}\label{eq:map-aux}
K(\uA)_1 \simeq \bbR\Hom(\Uadd(\uk), \Uadd(\uA))_1 \too | HH_{DK}(S.\widehat{\uA}_\dg)|\,.
\end{equation}
As explained in the proof of Theorem~\ref{thm:1}, we have natural weak equivalences
$$ \bbR\Hom(\Uadd(\uk), \Uadd(\uA))_1 \simeq |N.w S.\widehat{\uA}| \simeq | \Map(k, S.\widehat{\uA}_\dg)|\,.$$
On the other hand we have also the following natural weak equivalence
$$ |HH_{DK}(S.\widehat{\uA}_\dg)| \simeq | \Map_{\cD(k)}(k, HH(S.\widehat{\uA}_\dg))|\,.$$
Therefore, \eqref{eq:map-aux} is obtained by passing to the geometric realization of the map
\begin{equation}\label{eq:key3}
\Map(\underline{k},S.\widehat{\uA}_\dg) \too \Map_{\cD(k)}(k, HH(S.\widehat{\uA}_\dg))
\end{equation}
induced by the Hochschild homology functor. Now, recall from McCarthy~\cite{McCarthy} that associated with the $k$-algebra $A$ we have not only the category $\widehat{\uA}$ of perfect $A$-modules but also the exact category $\cP_A$ of finitely generated projective $A$-modules. As with any exact category, we can consider $\cP_A$ as a category with cofibrations and weak equivalences, where the weak equivalences are the isomorphisms. Moreover, since $\cP_A$ is $k$-linear we can also view it as a dg category. We have then an inclusion functor $\cP_A \hookrightarrow \widehat{\uA}$ and an inclusion dg functor $\cP_A \hookrightarrow \widehat{\uA}_\dg$. These functors allow us to construct the following (solid) diagram of bisimplicial sets:
$$
\xymatrix@C=2em@R=1.5em{
\mathrm{obj}(S.\cP_A) \ar@{-->}[rr]^M \ar[d]_\alpha^\simeq && HH_{DK}(S.\cP_A) \ar[dd]_\simeq^\gamma \\
N.iS.\cP_A \ar[d]_\beta^\simeq  & \\
N.wS. \widehat{\uA} \ar[rr]_-{\eqref{eq:key3}} && HH_{DK}(S.\widehat{\uA}_\dg)\,.
}
$$
The bisimplicial set $\mathrm{obj}(S.\cP_A)$ is constant in one simplicial direction and the morphism $\alpha$ corresponds to the inclusion of objects. As proved by Waldhausen \cite{Wald}, $\alpha$ is a weak equivalence since we are considering the simplicial category $iS.\cP_A$ of isomorphisms. The morphism $\beta$ is the one induced by the inclusion functor $\cP_A \hookrightarrow \widehat{\uA}$. As proved by Thomason-Trobaugh \cite[Theorem~1.11.7]{TT}, $\beta$ is also a weak equivalence since $\widehat{\uA}$ identifies naturally with the category of bounded cochain complexes in $\cP_A$. The morphism $\gamma$ is the one induced by the inclusion dg functor $\cP_A \hookrightarrow \widehat{\uA}_\dg$. Thanks to Keller-McCarthy's agreement property (see \cite[\S1.5]{cyclic} and \cite[Proposition~2.4.3]{McCarthy}), $\gamma$ is also a weak equivalence. Finally, $M$ is the induced morphism.

Now, let $q$ be a non-negative integer and $x$ an element of the set $\mathrm{obj}(S_q\cP_A)$. Under the weak equivalences $\alpha$ and $\beta$, this element corresponds to an object $x$ of the category $S_q\widehat{\uA}$. Using the natural dg enrichment $S_q\widehat{\uA}_\dg$ of $S_q\widehat{\uA}$, we can represent $x$ by a dg functor $\underline{x}: \uk \to S_q\widehat{\uA}_\dg$ which maps the unique object of $\uk$ to the object $x$. This dg functor is one of the $0$-simplices of the simplicial set $\Map(\uk, S_q\widehat{\uA}_\dg)$ (see \S\ref{sub:Morita}) and so it is mapped by the above map \eqref{eq:key3} to the following morphism of complexes 
$$ HH(\underline{x}):k \simeq HH(\uk) \too HH(S_q\widehat{\uA}_\dg)\,.$$
A direct inspection shows that this morphism corresponds to the zero cycle of $HH(S_q\widehat{\uA}_\dg)$ given by the identity of $x$, or equivalently to the $0$-simplice of the simplicial $k$-module $HH_{DK}(S_q\widehat{\uA}_\dg)$ given by the identity of $x$. Therefore, if we denote by $HH_{DK}(S.\cP_A)_0$ the $0$-simplices of $HH_{DK}(S.\cP_A)$, the morphism $M$ admits the following factorization
$$ \mathrm{obj}(S.\cP_A) \stackrel{\mathrm{id}}{\too} HH_{DK}(S.\cP_A)_0 \stackrel{\mathrm{inc}}{\hookrightarrow} HH_{DK}(S.\cP_A)$$
as in \cite[\S4.4]{McCarthy}. McCarthy has shown in \cite[\S4.5]{McCarthy} that this latter construction agrees with the Dennis trace map. Therefore, since the above morphisms $\alpha$, $\beta$ and $\gamma$ are weak equivalences, the proof concerning the Dennis trace map is finished.   

We now show that $ch_n^{\Lambda}$ agrees with the $n^{\textrm{th}}$ negative Chern character. Via the Dold-Kan correspondance between complexes and spectra, $\bbR\Hom_{\cD(\Lambda)}(k, C(\uA))$ identifies with the (desuspension of the) total negative complex associated to the simplicial mixed complex $C(S.\widehat{\uA}_\dg)$; see \cite[\S3]{McCarthy}. Following the same arguments as those of the Dennis trace map, we need to study the induced morphism of mixed complexes
\begin{equation*}
C(\underline{x}):k \simeq C(k) \too C(S_q\widehat{\uA}_\dg)\,.
\end{equation*}
As shown in \cite[\S2]{JK} this morphism of mixed complexes corresponds to the following morphism of complexes
\begin{equation}\label{eq:key-morph}
B^{-}C(\underline{x}): B^{-}(k) \too B^{-}(S_q\widehat{\uA}_\dg)\,,
\end{equation}
where $B^{-}$ denotes Connes' $B^{-}$-construction. Now, if we write $1$ for the unit of $k$, a simple computation show us that $B^{-}(k)$ is canonically endowed with the canonical zero cycle 
$$ \prod_{t=0}^{\infty} (-1)^t \frac{(2t)!}{t!}(1 \otimes \cdots \otimes 1) \,.$$
Therefore, since $1$ is mapped to the identity of $x$, the above morphism \eqref{eq:key-morph} corresponds to the following zero cycle
\begin{equation}\label{eq:cycle}
 \prod_{t=0}^{\infty} (-1)^t \frac{(2t)!}{t!}(\mathrm{id}_x \otimes \cdots \otimes \mathrm{id}_x) \in \mathsf{Z}_0 B^{-}(S.\widehat{\uA}_\dg)\,.
\end{equation}
We obtain then the same factorization 
$$ \mathrm{obj}(S.\cP_A) \stackrel{\eqref{eq:cycle}}{\too} \mathsf{Z}_0B^{-}(S.\widehat{\uA}_\dg) \stackrel{\mathrm{inc}}{\hookrightarrow} B^{-}(S.\widehat{\uA}_\dg)$$
of the morphism $M$ as in~\cite[\S4.4]{McCarthy}. McCarthy has shown in \cite[\S4.5]{McCarthy} that this latter construction agrees with the negative Chern character. Therefore, since the above morphisms $\alpha$, $\beta$ and $\gamma$ are weak equivalences, the proof is finished. 
\end{proof}
\section{Proof of Theorem~\ref{thm:3}}
\begin{proof}
The construction of the symmetric monoidal additive functor $N$ makes use of an auxiliar category $\Hmo_0$ and is divided in two steps:
$$ N : \BK \stackrel{N_1}{\too} \Hmo_0 \stackrel{N_2}{\too} \Madd(e)\,.$$
We start by describing the functor $N_2$. Recall from \cite{IMRN} that there is a symmetric monoidal functor
$$ \cU_a: \dgcat \too \Hmo \too \Hmo_0\,,$$
with values in an additive category which, heuristically, is the ``zero-dimensional'' analogue of the universal additive invariant $\Uadd$. The objects of $\Hmo_0$ are the dg categories and the (abelian group) morphisms $\Hom_{\Hmo_0}(\cB,\cA)$ are the Grothendieck group of the triangulated category $\rep(\cB,\cA)$; see \S\ref{sub:Morita}. Composition is induced by the (derived) tensor product of bimodules. The symmetric monoidal structure is induced by the (derived) tensor product of dg categories. Note that we have a natural symmetric monoidal functor $\Hmo \to \Hmo_0$ which sends an element $X$ of $\rep(\cB, \cA)$ to the corresponding class $[X]$ in the Grothendieck group $K_0\rep(\cB,\cA)$. A simple symmetric monoidal sharpening of \cite[Theorem~6.3]{IMRN} provides us the following universal characterization of $\cU_a$: given any additive category $\mathsf{A}$, endowed with a symmetric monoidal structure, we have an induced equivalence of categories
\begin{equation}\label{eq:univ-prop}
(\cU_a)^\ast: \Fun_{\,\mathrm{add}}^\otimes(\Hmo_0, \mathsf{A}) \stackrel{\sim}{\too} \Fun_{\,\mathrm{a}}^\otimes(\dgcat,\mathsf{A})\,,
\end{equation}  
here the left-hand side denotes the category of symmetric monoidal additive functors from $\Hmo_0$ to $A$ and the right-hand side the category of symmetric monoidal functors from $\dgcat$ to $\mathsf{A}$, which invert derived Morita equivalences and send split exact sequences to direct sums. Therefore, since the category $\Madd(e)$ is additive and the symmetric monoidal functor
$$ \Uadd(e): \Hmo=\HO(\dgcat)(e)  \too \Madd(e)$$
sends split exact sequences to direct sums, equivalence \eqref{eq:univ-prop} furnishes us a unique symmetric monoidal additive functor $N_2$ making the following diagram commute
\begin{equation}\label{eq:diagram5}
\xymatrix{
\Hmo \ar[d] \ar[dr]^{\Uadd(e)} & \\
\Hmo_0 \ar[r]_-{N_2} & \Madd(e)\,.
}
\end{equation}
Now, recall from \cite[\S4.7]{ICM} that a dg category $\cB$ is called {\em homotopically finitely presented} if for every filtered direct system of dg categories $\{\cC_j\}_{j \in J}$, the canonical map
$$ \mathrm{hocolim}_{j \in J} \Map(\cB, \cC_j) \too \Map(\cB, \mathrm{hocolim}_{j \in J}\, \cC_j)$$
is an isomorphism of simplicial sets. An important class of examples is provided by the {\em smooth} and {\em proper} dg categories in the sense of Kontsevich; see~\cite{IAS,Miami}. As proved in \cite[Theorem~15.10]{Duke}, equivalence \eqref{eq:corep-spt} and isomorphisms \eqref{eq:corep-K} can be greatly generalized: given dg categories $\cA$ and $\cB$, with $\cB$ homotopically finitely presented, we have a natural equivalence of spectra
\begin{equation}\label{eq:biv-ext1}
\bbR\Hom(\Uadd(\cB), \Uadd(\cA)) \simeq K\rep_\dg(\cB,\cA)
\end{equation}
and isomorphisms of abelian groups
\begin{equation}\label{eq:biv-ext2} 
\Hom(\Uadd(\cB),\Uadd(\cA)[-n]) \simeq K_n\rep_\dg(\cB,\cA) \qquad n \geq 0\,.
\end{equation}
Equivalence \eqref{eq:corep-spt}, resp. isomorphisms \eqref{eq:corep-K}, can be recovered from \eqref{eq:biv-ext1}, resp. from \eqref{eq:biv-ext2}, by taking $\cB=\uk$. Moreover, the category of perfect $\rep_\dg(\cB,\cA)$-modules is equivalent to $\rep(\cB,\cA)$ which implies that $K_0\rep_\dg(\cB,\cA)$ is naturally isomorphic to the Grothendieck group of the triangulated category $\rep(\cB,\cA)$. This allow us to conclude that the functor $N_2$ induces the following isomorphisms
\begin{equation}\label{eq:iso-induced}
\Hom_{\Hmo_0}(\cB,\cA) \stackrel{\sim}{\too} \Hom_{\Madd(e)}(N_2(\cB), N_2(\cA))\,.
\end{equation}
Now, recall that the mixed complex construction $\cA \mapsto C(\cA)$ inverts derived Morita equivalences, sends split exact sequences to direct sums, and is moreover symmetric monoidal. Hence, the above equivalence \eqref{eq:univ-prop} furnishes us a unique symmetric monoidal additive functor $\widetilde{C}: \Hmo_0 \to \cD(\Lambda)$ such that $\widetilde{C} \circ \cU_a =C$.
\begin{lemma}\label{lem:key4}
The following diagram commutes
\begin{equation*}
\xymatrix{
\Hmo_0 \ar[d]_{N_2} \ar[r]^{\widetilde{C}} & \cD(\Lambda) \\
\Madd(e) \ar[ur]_{\overline{C}(e)} & \,.
}
\end{equation*}
\begin{proof}
Both functors $\widetilde{C}$ and $\overline{C}(e) \circ N_2$ are symmetric monoidal and additive. Therefore, by pre-composing them with $\cU_a$ we obtain functors from $\dgcat$ to $\cD(\Lambda)$ which invert derived Morita equivalences and send split exact sequences to direct sums. Thanks to the above commutative diagram \eqref{eq:diagram5}, we observe that these latter functors are in fact the same, namely they are both the mixed complex construction. Making use of equivalence \eqref{eq:univ-prop} we then conclude that $\widetilde{C} = \overline{C}(e) \circ N_2$, which achieves the proof.
\end{proof}
\end{lemma}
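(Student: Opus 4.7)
The plan is to invoke the universal property~\eqref{eq:univ-prop} once more. Both $\widetilde{C}$ and $\overline{C}(e) \circ N_2$ are symmetric monoidal additive functors from $\Hmo_0$ to $\cD(\Lambda)$: the former by its very construction via~\eqref{eq:univ-prop}, and the latter because $N_2$ was extracted from the same universal property as a symmetric monoidal additive functor, while $\overline{C}(e)$ is the base-category component of the symmetric monoidal morphism of derivators $\overline{C}$ furnished by~\eqref{eq:catmon} (and is automatically additive as the base of a morphism of triangulated derivators). By~\eqref{eq:univ-prop}, each such functor is determined by its pre-composition with $\cU_a : \dgcat \to \Hmo_0$, so it is enough to verify
\[
\widetilde{C} \circ \cU_a \;=\; \overline{C}(e) \circ N_2 \circ \cU_a
\]
as symmetric monoidal functors $\dgcat \to \cD(\Lambda)$.

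The left-hand side is, by the defining property of $\widetilde{C}$, nothing but the mixed complex functor $C$. For the right-hand side I would factor $\cU_a$ through $\Hmo$ and appeal to the commutative triangle~\eqref{eq:diagram5}, which asserts that $N_2$ restricted along $\Hmo \to \Hmo_0$ agrees with $\Uadd(e)$. Hence $N_2 \circ \cU_a$ factors as the canonical localization $\dgcat \to \Hmo$ post-composed with $\Uadd(e)$; at the level of morphisms of derivators this is simply $\Uadd$ evaluated at the base category $e$. Post-composing with $\overline{C}(e)$ and invoking the defining relation $\overline{C} \circ \Uadd = C$ (which characterises $\overline{C}$ via~\eqref{eq:catmon}) then gives $C$ once again.

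Both sides of the putative equality therefore pre-compose with $\cU_a$ to the mixed complex construction, and the uniqueness half of~\eqref{eq:univ-prop} forces $\widetilde{C} = \overline{C}(e) \circ N_2$, which is exactly the commutativity of the stated triangle. The one point deserving care -- and what I would treat as the main obstacle, though a mild one -- is the verification that $\overline{C}(e) \circ N_2$ is symmetric monoidal on the nose in the sense demanded by~\eqref{eq:univ-prop}, rather than merely monoidal up to a coherent isomorphism; here I would lean on the fact that the symmetric monoidal structures on $\Hmo_0$, $\Madd(e)$ and $\cD(\Lambda)$ were set up so that $\cU_a$, $\Uadd(e)$ and $\overline{C}(e)$ are all strict (or strictly compatible) symmetric monoidal, so no coherence matching is required and the universal property applies without alteration.
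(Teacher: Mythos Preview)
Your proposal is correct and follows essentially the same route as the paper: both arguments verify that $\widetilde{C}$ and $\overline{C}(e)\circ N_2$ are symmetric monoidal additive, pre-compose each with $\cU_a$, identify both composites with the mixed complex construction $C$ via diagram~\eqref{eq:diagram5} and the defining relation $\overline{C}\circ\Uadd = C$, and then invoke the uniqueness part of~\eqref{eq:univ-prop}. Your closing worry about strictness is unnecessary, since~\eqref{eq:univ-prop} is an equivalence of categories of (strong) symmetric monoidal functors, so the conclusion is an isomorphism of such functors rather than a literal equality, and no strictness hypothesis is needed.
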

We now describe the functor $N_1: \BK \to \Hmo_0$. Let us start with the additive category $\BK$. The objects of $\BK$ are the unital $k$-algebras and the (abelian group) morphisms $\Hom_{\BK}(B,A)$ are the Grothendieck groups of the exact categories $\Rep(B,A)$ of those $B\text{-}A$-bimodules which are projective and of finite type as $A$-modules; see \cite[\S II Definition~1.1]{Kassel-biv}. Composition is induced by the (derived) tensor product of bimodules; see \cite[\S II.2]{Kassel-biv}. The symmetric monoidal structure is the one induced by the tensor product of $k$-algebras; see \cite[\S II.3]{Kassel-biv}. Given unital $k$-algebras $A$ and $B$, we consider the natural functor
\begin{eqnarray*}
\Rep(B,A) \too \rep(\underline{B},\underline{A}) && P \mapsto \underline{P}\,,
\end{eqnarray*}
where $\underline{P}$ is concentrated in degree zero. This functor is fully-faithful and sends every short exact sequence of the exact category $\Rep(A,B)$ to a distinguished triangle of the triangulated category $\rep(\underline{B}, \underline{A})$. Therefore, it gives rise to an abelian group homomorphism
\begin{eqnarray}\label{eq:homo}
K_0\Rep(B,A) \too K_0\rep(\underline{B}, \underline{A}) && [P] \mapsto [\underline{P}]\,.
\end{eqnarray} 
\begin{lemma}\label{lem:Kassel-inv}
The abelian group homomorphism \eqref{eq:homo} is invertible.
\end{lemma}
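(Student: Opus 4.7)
The plan is to construct an explicit inverse $\psi\colon K_0\rep(\underline{B},\underline{A})\to K_0\Rep(B,A)$ to the homomorphism \eqref{eq:homo}. Given $X\in \rep(\underline{B},\underline{A})$, I would represent $X$ up to quasi-isomorphism by a bounded complex $P^\bullet$ of ordinary $B$-$A$-bimodules with each $P^i\in \Rep(B,A)$ placed in a single cohomological degree, and then set $\psi([X]):=\sum_i(-1)^i[P^i]$. Granted such a resolution, the rest reduces to a standard package: well-definedness is obtained by comparing any two resolutions $P^\bullet,Q^\bullet\to X$ through a common refinement; additivity on a distinguished triangle $X\to Y\to Z\to X[1]$ follows by producing, via a functorial choice of resolutions, a short exact sequence $0\to P_X^\bullet\to P_Y^\bullet\to P_Z^\bullet\to 0$ in $\Rep(B,A)$; and both compositions with \eqref{eq:homo} are the identity because the tautological resolution of $\underline{P}$ for $P\in\Rep(B,A)$ is $P$ itself concentrated in degree zero.

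The key technical input is the existence of the bounded resolution. The plan here is by descending induction from the top non-vanishing cohomology of $X$ considered as an $A$-complex. At each stage I would cover the current object by a $B$-$A$-bimodule that is finitely generated projective as a right $A$-module. The natural candidate $B\otimes_k V$, for $V$ a finitely generated projective right $A$-module surjecting onto the target, is automatically $A$-projective but typically not $A$-finitely generated when $B$ is large; the fix is to cut this down to a finitely generated sub-bimodule, which is possible because the target is itself $A$-finitely generated and $V$ is $A$-projective. Iterating on the kernel, the fact that $X|_{\underline{A}}$ is perfect bounds the $A$-projective dimensions of the successive kernels, so the procedure terminates after finitely many steps and produces the required complex $P^\bullet$ in $\Rep(B,A)$.

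The main obstacle is precisely this resolution step: producing bimodule covers that are simultaneously $A$-projective, $A$-finitely generated, and surjective onto an arbitrary object of $\rep(\underline{B},\underline{A})$, and then controlling the $A$-projective dimension along the induction to guarantee termination. This is where the abelian structure on $B$-$A$-bimodules (used to form the covers) must be reconciled with the triangulated structure on $\rep(\underline{B},\underline{A})$ (which encodes perfection over $A$). Once this is in hand, the verification that $\psi$ is an inverse to \eqref{eq:homo} proceeds by standard Grothendieck-group manipulations and closes the proof.
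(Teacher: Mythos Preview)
Your strategy coincides with the paper's: define the inverse by $[X]\mapsto\sum_i(-1)^i[P^i]$ for a bounded representative $P^\bullet$ of $X$ with each $P^i\in\Rep(B,A)$. The paper asserts in one line that such a representative exists (since $X$ is compact in $\cD(\underline{A})$) and does not elaborate; your additional care about well-definedness and additivity on triangles is routine and fine.

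The gap is in your construction of the covering bimodules. Given a bimodule $M$ that is finitely generated over $A$, you propose to surject from $B\otimes_k V$ and then ``cut down to a finitely generated sub-bimodule.'' But a sub-bimodule of an $A$-projective bimodule need not be $A$-projective; and in the other direction, the smallest sub-\emph{bimodule} of $B\otimes_k V$ hitting chosen $A$-generators of $M$ is $\sum_i Bx_iA$, which is typically \emph{not} finitely generated over $A$ once $B$ fails to be finite over $k$. So your cut-down secures neither $A$-projectivity nor $A$-finite generation simultaneously. (There is also a smaller issue one step earlier: $B\otimes_k V$ is $A$-projective only when $B$ is $k$-projective, which is not assumed.) You correctly flag this resolution step as the crux, but the mechanism you propose does not close it; a different argument is needed to show that every object of $\rep(\underline{B},\underline{A})$ is quasi-isomorphic, as a bimodule complex, to a bounded complex with terms in $\Rep(B,A)$.
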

\begin{proof}
Let $X$ be an object of $\rep(\underline{B}, \underline{A})$. Since this bimodule is compact as an object in $\cD(\underline{A})$, we can assume that it is given by a bounded complex of $B\text{-}A$-bimodules $(X_n)_{n \in \bbZ}$ which are projective and of finite type as $A$-modules. Hence, the assignment 
\begin{equation*}
[X] \longmapsto \sum_n (-1)^n [X_n] \in K_0 \Rep(B,A)
\end{equation*}
extends by linearity to $K_0\rep(\underline{B},\underline{A})$ and gives rise to the inverse of \eqref{eq:homo}.
\end{proof}
The above abelian group isomorphisms \eqref{eq:homo} are compatible with the composition operations as well as with the symmetric monoidal structures of the categories $\BK$ and $\Hmo_0$. Hence, they assemble themselves in a well-defined fully-faithful symmetric monoidal  additive functor $N_1: \BK \too \Hmo_0$. Moreover, the bivariant Chern character constructed by Kassel in \cite[\S II.4]{Kassel-biv} corresponds, under the above description of $\BK$, to a symmetric monoidal functor $ch_B: \BK \to \cD(\Lambda)$.
\begin{lemma}\label{lem:key3}
The following diagram commutes
\begin{equation*}
\xymatrix{
\BK \ar[d]_{N_1} \ar[r]^-{ch_B} & \cD(\Lambda) \\
\Hmo_0 \ar[ur]_{\widetilde{C}} & .
}
\end{equation*} 
\end{lemma}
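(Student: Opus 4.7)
The plan is to check commutativity separately on objects and on morphisms, using that both $\widetilde{C}\circ N_1$ and $ch_B$ are symmetric monoidal additive functors and that the morphism groups in $\BK$ are generated by classes $[P]$ of bimodules.

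On objects the verification is immediate: for a unital $k$-algebra $A$ we have $\widetilde{C}(N_1(A))=\widetilde{C}(\underline{A})=(\widetilde{C}\circ\cU_a)(\underline{A})=C(\underline{A})$, which is the usual mixed complex of $A$ and which agrees with $ch_B(A)$ by construction.

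For morphisms, given $P\in\Rep(B,A)$, my approach would be to factor $\underline{P}$ in $\Hmo$ through the ring homomorphism encoding the $B$-action. Concretely, pick an idempotent $e\in M_n(A)$ and a right-$A$-linear isomorphism $P\cong eA^n$; the left $B$-action then becomes a unital $k$-algebra homomorphism $\rho\colon B\to eM_n(A)e$, and in $\Hmo$ the bimodule $\underline{P}$ factors as
\begin{equation*}
\underline{B}\stackrel{\underline{\rho}}{\too} \underline{eM_n(A)e}\stackrel{\underline{P}}{\too} \underline{A},
\end{equation*}
where the right-hand arrow is represented by $P$ viewed as an $eM_n(A)e$-$A$-bimodule (so that the composite bimodule $eM_n(A)e\otimes_{eM_n(A)e}P\cong P$ recovers $\underline{P}$ with its original $B$-action). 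Applying $\widetilde{C}$ and using the defining equality $\widetilde{C}\circ\cU_a=C$ then gives
\begin{equation*}
C(B)\stackrel{C(\rho)}{\too} C(eM_n(A)e)\stackrel{\mathrm{tr}_P}{\too} C(A)
\end{equation*}
in $\cD(\Lambda)$, where $\mathrm{tr}_P$ is the ``trace on finitely generated projective modules'' induced by $P$. A direct comparison with the explicit construction in \cite[\S II.4]{Kassel-biv} identifies this composite with $ch_B([P])$, and by additivity of both sides in $K_0\Rep(B,A)$ the lemma follows.

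The main obstacle I foresee is the identification of the mixed complex morphism arising from the bimodule $\underline{P}\in\rep(\underline{eM_n(A)e},\underline{A})$ with Kassel's explicit trace map at the Hochschild chain level. This reduces to checking that the dg-categorical trace built into the mixed complex construction agrees with the classical Hochschild trace defined via a dual basis of $P$ over $A$, which is a routine but genuinely necessary calculation. Once this is in place, the universal equality $\widetilde{C}\circ\cU_a=C$ forces agreement, and no further work is required.
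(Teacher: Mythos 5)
Your proposal follows essentially the same strategy as the paper: factor $\underline{P}$ in $\Hmo$ through the ring homomorphism that encodes the $B$-action, apply $\widetilde{C}$, use $\widetilde{C}\circ\cU_a=C$, and compare with Kassel's construction. The paper, however, makes a choice that removes the ``routine but genuinely necessary'' calculation you flag. Rather than the corner algebra $eM_n(A)e$ with a unital map $\rho$ and an ad hoc trace $\mathrm{tr}_P$, the paper stays literally inside Kassel's definition: it takes the (non-unital) homomorphism $i\colon B\to M_n(A)$, obtained by composing $\gamma\colon B\to\mathrm{End}_A(P)$ with the inclusion into $\mathrm{End}_A(P\oplus Q)$ and conjugation by $\alpha$, and uses the Morita morphism $e_{11}\colon A\to M_n(A)$, which becomes invertible in $\Hmo$. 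The bimodule $\underline{P}$ then factors as $\underline{e_{11}}^{-1}\circ\underline{i}$. Two cited facts finish the proof with no further computation: from \cite[\S2.4]{Keller}, the mixed complex of the non-unital homomorphism $i$ agrees with the mixed complex of the associated bimodule, so $\widetilde{C}([\underline{P}]) = C(\underline{e_{11}})^{-1}\circ C(\underline{i})$; and from \cite[\S3]{Kassel-biv}, $C(\underline{e_{11}})$ and Kassel's generalized trace $\mathrm{Tr}$ are mutually inverse, so this composite is $\mathrm{Tr}\circ C(\underline{i})=ch_B([P])$ by Kassel's own definition. Your corner-algebra version is correct in outline, and indeed $eM_n(A)e\cong\mathrm{End}_A(P)$ so $\rho$ is Kassel's $\gamma$, but because $\mathrm{tr}_P$ is not an ingredient of Kassel's definition you are left with a genuine, if elementary, comparison to carry out; the paper's formulation reduces that comparison to the already-established inverse relationship between $C(\underline{e_{11}})$ and $\mathrm{Tr}$.
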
 
\begin{proof}
Let us start by recalling from \cite[\S II.4]{Kassel-biv} Kassel's construction of the bivariant Chern character. Let $A$ and $B$ be two unital $k$-algebras and $P$ an object of $\Rep(B,A)$. Since $P$ is projective and of finite type as a $A$-module, there is a $A$-module $Q$ and an isomorphism $\alpha: P\oplus Q \stackrel{\sim}{\to} A^n$. We can then consider the following morphism of $k$-algebras
\begin{equation*}
i: B \stackrel{\gamma}{\too} \mathrm{End}_A(P) \hookrightarrow \mathrm{End}_A(P\oplus Q) \mathop{\stackrel{\mathrm{Ad}(\alpha)}{\too}}_{\sim} M_n(A)\,. 
\end{equation*}
The morphism $\gamma$ is the one induced by the $B\text{-}A$-bimodule structure of $P$ and $\mathrm{Ad}(\alpha)$ denotes the conjugation by $\alpha$. Note that although each one of the $k$-algebras in the above morphism is unital, the inclusion morphism $\mathrm{End}_A(P) \hookrightarrow \mathrm{End}_A(P\oplus Q)$ does not preserve the unit. In particular, $i$ is not unit preserving. Nevertheless, there is a well-defined morphism between the associated mixed complexes. Kassel's bivariant Chern character $ch_B([P])$ of $P$ is by definition the composed morphism $\mathrm{Tr} \circ C(i)$ in $\cD(\Lambda)$, where $\mathrm{Tr}$ is the generalized trace map. In particular, this construction is independ of the choices of $Q$, $n$, and $\alpha$.

Now, recall from \cite[\S2.4]{Keller} that given unital $k$-algebras $A$ and $B$ and a (not necessarily unit preserving) homomorphism $\varphi:B \to A$, we can construct the $B\text{-}A$-bimodule $\varphi({\bf 1}_B)A_A$, whose $B\text{-}A$-action is given by $b\cdot \varphi(1)a\cdot a':=\varphi(b)aa'$. This $B\text{-}A$-bimodule gives rise to an object $\underline{\varphi({\bf 1}_B)A_A}$ of $\rep(\underline{B},\underline{A})$ and the value of the mixed complex construction on it agrees with the value of the mixed complex construction on $\varphi$. Consider then the following diagram
\begin{equation*}
\underline{B} \stackrel{\underline{\gamma}}{\too} \underline{\mathrm{End}_A(P)} \hookrightarrow \underline{\mathrm{End}_A(P\oplus Q)} \mathop{\stackrel{\underline{\mathrm{Ad}(\alpha)}}{\too}}_{\sim} \underline{M_n(A)} \stackrel{\underline{e_{11}}}{\longleftarrow} \underline{A} 
\end{equation*}
in $\Hmo$, where $e_{11}: A \to M_n(A)$ is the $k$-algebra homomorphism  which maps $A$ to the position $(1,1)$. Since $e_{11}$ induces an equivalence between $A$-modules and $M_n(A)$-modules, the morphism $\underline{e_{11}}$ becomes invertible in $\Hmo$. Moreover, the $\underline{B}\text{-}\underline{A}$-bimodule $\underline{P}$ can be expressed as the composition $\underline{e_{11}}^{-1} \circ \underline{i}$. Hence, by the above considerations, we conclude that the value of $\widetilde{C}$ on $[\underline{P}]$ is the composed morphism $C(\underline{e_{11}})^{-1} \circ C(\underline{i})$ in $\cD(\Lambda)$. Finally, since $C(\underline{e_{11}})$ and the generalized trace map $\mathrm{Tr}$ are inverse of each other (see \cite[\S3]{Kassel-biv}) we conclude that $ch_B([P]) = \widetilde{C}([\underline{P}])$. By linearity, this implies that $ch_B=\widetilde{C} \circ N_1$ and so the proof is finished. 
\end{proof}
We are now ready to conclude the proof of Theorem~\ref{thm:3}. Let $N$ be the composed functor $N_2 \circ N_1$. Then, the diagram of Theorem~\ref{thm:3} can be obtained by concatenating the commutative diagrams of Lemmas~ \ref{lem:key4} and \ref{lem:key3}. In particular, it is commutative. Recall from \cite[\S4.7]{ICM} that a $k$-algebra $B$ is homotopically finitely presented as a dg $k$-algebra if and only if the naturally associated dg category $\underline{B}$ is homotopically finitely presented as a dg category. Therefore, isomorphisms \eqref{isom-thm3} follow from \eqref{eq:iso-induced} and Lemma~\ref{lem:Kassel-inv}.
\end{proof}
\section{Proof of Proposition~\ref{prop:4}}
\begin{proof}
By construction, the additive motivator $\Madd$ is triangulated. Therefore, as shown in \cite[\S A.3]{CT}, it comes equipped with a canonical action
\begin{eqnarray*}
\HO(\Spt) \times \Madd \too \Madd && (E,X) \mapsto E \otimes X
\end{eqnarray*}
of the derivator associated to spectra. Given any spectrum $E$ and objects $X$ and $Y$ in $\Madd(e)$, the following natural weak equivalence of spectra holds
\begin{equation}\label{eq:weak-spt}
\bbR\Hom_{\Madd(e)}(E\otimes X,Y) \simeq \bbR\Hom_{\Ho(\Spt)}(E, \bbR\Hom_{\Madd(e)}(X,Y))\,.
\end{equation}
Let us now consider the distinguished triangle of spectra
\begin{equation}\label{eq:classical-triangle}
 \bbS \stackrel{\cdot l}{\too} \bbS \too \bbS/l \too \bbS[1]\,,
\end{equation} 
where $\bbS$ is the sphere spectrum and $\cdot l$ the $l$-fold multiple of the identity morphism. By applying the functor $-\otimes \Uadd(\underline{\bbZ})$ to it we obtain a distinguished triangle in the category of non-commutative motives
$$ \Uadd(\underline{\bbZ}) \stackrel{\cdot l}{\too} \Uadd(\underline{\bbZ}) \too \bbS/l \otimes \Uadd(\underline{\bbZ}) \too \Uadd(\underline{\bbZ})[1]\,,$$
which allow us to conclude that $\bbS/l \otimes \Uadd(\underline{\bbZ})$ is the mod-$l$ Moore object $\Uadd(\underline{\bbZ})/l$ of $\Uadd(\underline{\bbZ})$. Therefore, by combining \eqref{eq:weak-spt} with \eqref{eq:corep-spt} we obtain the weak equivalence
$$ \bbR\Hom(\Uadd(\underline{\bbZ})/l, \Uadd(\cA)) \simeq \bbR \Hom_{\Ho(\Spt)}(\bbS/l, K(\cA))\,.$$
Note that by construction $\bbS/l$ is a dualizable object in the symmetric monoidal category $\Ho(\Spt)$. Its dual $(\bbS/l)^\vee$ is given by $\bbR\Hom_{\Ho(\Spt)}(\bbS/l, \bbS)$. By applying the duality functor $\bbR\Hom_{\Ho(\Spt)}(-,\bbS)$ to \eqref{eq:classical-triangle} we obtain the distinguished triangle
$$ \bbS[-1] \too (\bbS/l)^\vee \too \bbS \stackrel{\cdot l}{\too} \bbS \,,$$
and so by ``rotating it'' twice (see \cite[\S1.1]{Neeman}) we conclude that $(\bbS/l)^\vee$ identifies with $(\bbS/l)[-1]$. Therefore, we have the following weak equivalences
$$ \bbR\Hom_{\Ho(\Spt)}(\bbS/l, K(\cA)) \simeq (\bbS/l)^\vee \wedge^\bbL K(\cA) \simeq(\bbS/l \wedge^\bbL  K(\cA))[-1]\,. $$
Finally, since by definition $K(\cA; \bbZ/l) = \bbS/l \wedge^\bbL K(\cA)$ we have
\begin{equation*}
\bbR \Hom\left(\Uadd(\underline{\bbZ})/l, \Uadd(\cA) \right) \simeq K(\cA;\bbZ/l)[-1]\,.
\end{equation*}
Now, let us assume that $l=pq$ with $p$ and $q$ coprime integers. In this case, $\bbZ/l\simeq \bbZ/p \times \bbZ/q$ and so  the natural maps $\bbS/l \to \bbS/p$ and $\bbS/l \to \bbS/q$ induce an isomorphism $\bbS/l \simeq \bbS/p \vee \bbS/q$ in $\Ho(\Spt)$. Therefore, since we have the natural identifications
\begin{eqnarray*}
\Uadd(\underline{\bbZ})/l \simeq \bbS/l \otimes \Uadd(\underline{\bbZ}) & \Uadd(\underline{\bbZ})/p \simeq \bbS/p \otimes \Uadd(\underline{\bbZ})  & \Uadd(\underline{\bbZ})/q \simeq \bbS/q \otimes \Uadd(\underline{\bbZ})\,,
\end{eqnarray*}
and the functor 
\begin{eqnarray*}
\Ho(\Spt) \too \Madd(e) && E \mapsto E  \otimes \Uadd(\underline{\bbZ})
\end{eqnarray*}
preserves sums, we obtain the isomorphism $\Uadd(\underline{\bbZ})/l \simeq \Uadd(\underline{\bbZ})/p \oplus \Uadd(\underline{\bbZ})/q$ in $\Madd(e)$.
\end{proof}

\end{document}